\def\alloc@#1#2#3#4#5%
	\def\etex@dummy@definition{}
\alloc@@\expandafter{\string#2}#5%
	\else\errmessage{No room for a new #2}\fi\fi
\newtheorem{theorem}{\bf Theorem}[section]
\newtheorem{definition}[theorem]{\bf Definition}
\newtheorem{corollary}[theorem]{\bf Corollary}
\newtheorem{lemma}[theorem]{\bf Lemma}
\newtheorem{proposition}[theorem]{\bf Proposition}
\newtheorem{remark}[theorem]{\bf Remark}
\newtheorem*{theorem*}{Theorem}
\def\End{{\rm End}}
\begin{document}

\title[Gap rigidity]{On Gap rigidity problems for compact Hermitian symmetric spaces}
\author{Cong Ding}
	\address{Morningside Center of Mathematics, Academy of Mathematics \& Systems Science, The Chinese Academy of Sciences, Beijing, China}
	\email{congding@amss.ac.cn}
	\subjclass[2010]{53C30, 14M17 \and 32M10}
	\maketitle
	
\begin{abstract}
	We prove a gap rigidity theorem for diagonal curves in maximal polyspheres of irreducible compact Hermitian symmetric spaces of tube type, which is a dual analogy to a theorem obtained by Mok. Motivated by the proof we show that Chow space of certain totally geodesic submanifolds is affine algebraic, which gives a weaker version of gap rigidity for a class of higher dimensional submanifolds.
\end{abstract}




\section{Introduction}
The gap rigidity problem for locally Hermitian symmetric space of noncompact type has been raised  by Mok\cite{MR1918134} and Eyssidieux-Mok \cite{MR2127948}. Write a pair of bounded symmetric domains as $i:D\hookrightarrow\Omega$ where $i$ is a geodesic embedding. Write $\Omega=G_0/K$ where $G_0=Aut(\Omega)$ and $K$ is the isotropy subgroup with respect to a reference point $o$, there is a $K^{\mathbb{C}}$-invariant Zariski open subset $\mathcal{O}_o$ in $Gr(\dim(D),T_o(\Omega))$ such that $[T_o(D)]\in \mathcal{O}_o$. If $\Gamma$ is a torsion free discrete subgoup of automorphisms, we say the gap rigidity holds in the Zariski topology if for any compact complex submanifold $S$ in the quotient $\Omega/\Gamma$ with $[T_x(S)]\in \mathcal{O}_o$ (by some lifting) at every point $x\in S$, $S$ must be totally geodesic.

\par In \cite{MR1918134}, a typical example is given using Poincar\'e-Lelong equation where $\Omega$ is an irreducible bounded symmetric domain of tube type with rank $\geq 2$ and $D$ is a diagonal curve in a maximal polydisk, and in this case $\mathcal{O}_o$ is given by the complement of highest characteristic subvariety in $\mathbb{P}(T_o(\Omega))$ (see \cite{MR1918134} for details), which is a hypersurface when $\Omega$ is of tube type. More precisely they obtained

\begin{theorem}\label{thm polydisk}\rm(\textbf{Mok, \cite[Theorem~1]{MR1918134}})
Suppose that $\Omega$ is an irreducible bounded symmetric domain of tube type with rank $\geq 2$ and $\Gamma$ is a torsion free discrete subgoup of automorphisms. If $C$ is a compact smooth curve in $\Omega/\Gamma$ such that every tangent vector on $C$ is generic, then $C$ is totally geodesic in $\Omega/\Gamma$. 
\end{theorem}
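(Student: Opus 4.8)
The plan is to combine the Gauss equation for the curve $C$ with a degree computation coming from the generic norm (a Poincar\'e--Lelong type argument), and to close the argument by integrating over the compact curve via Gauss--Bonnet. Throughout I fix on $\Omega$, and hence on $\Omega/\Gamma$, the canonical K\"ahler--Einstein (Bergman) metric $g$ with K\"ahler form $\omega$, and I use the standard fact that its holomorphic sectional curvature takes values in an interval $[-b,-a]$ with $0<a\le b$, where the maximal (least negative) value $-a$ is attained exactly on the directions tangent to diagonals of maximal polydisks. In particular $H_\Omega(\xi)\le -a$ for every unit tangent vector $\xi$, with equality iff $\xi$ is a diagonal direction. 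Letting $\tau$ be a local holomorphic field spanning $TC$, the Gauss equation for the complex curve $C$ reads
\[
K_C \;=\; H_\Omega(\tau) - \|\sigma\|^2 \;\le\; -a ,
\]
where $K_C$ is the Gaussian curvature of the induced metric and $\|\sigma\|^2\ge 0$ is the squared second fundamental form, vanishing at a point precisely when $C$ is totally geodesic there. Thus it suffices to prove $\int_C\|\sigma\|^2\,dA=0$.

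Next I would bring in the generic norm. On a tube type $\Omega$ of rank $r$ there is an $\mathrm{Aut}(\Omega)$-invariant holomorphic polynomial $N$ of degree $r$ on the tangent bundle, characterized by the tube type structure by the property that $N(\xi)\ne 0$ iff $\xi$ is of maximal rank; it is a holomorphic section of $\mathrm{Sym}^r T^*\Omega\otimes L$ for a homogeneous line bundle $L$, and both $N$ and $L$ descend to $\Omega/\Gamma$. Evaluating $N$ on $\tau$ produces a global holomorphic section $\tilde N$ of $M:=K_C^{\otimes r}\otimes L|_C$, the reparametrization ambiguity $\tau\mapsto f\tau$ being absorbed by $K_C^{\otimes r}$. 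By hypothesis every tangent direction of $C$ is of maximal rank, so $\tilde N$ is nowhere zero and $\deg M=0$, that is $r\,\deg K_C + \deg(L|_C)=0$. Since $L$ is homogeneous its curvature with respect to the invariant metric $h_L$ is an invariant $(1,1)$-form, hence (by irreducibility of $\Omega$) $c_1(L,h_L)=\tfrac{\mu}{2\pi}\,\omega$ for a constant $\mu$; restricting to $C$ gives $\deg(L|_C)=\tfrac{\mu}{2\pi}\,\mathrm{Area}(C)$, while $\deg K_C=-\chi(C)$. Therefore
\[
\mathrm{Area}(C)=\frac{2\pi r\,\chi(C)}{\mu}.
\]

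The key computation, which I expect to be the main obstacle, is the curvature identity
\[
\mu=-a r ,
\]
relating the curvature eigenvalue $\mu$ of the norm bundle $L$ to the rank $r$ and the maximal holomorphic sectional curvature $-a$. This is an identity on $\Omega$ alone, and I would verify it at the reference point $o$ by a direct root-system / Jordan-algebra computation, using the explicit form of the Bergman metric curvature together with the description of $N$ as the Jordan determinant; equivalently it expresses that a compact totally geodesic diagonal curve satisfies Gauss--Bonnet and the degree formula simultaneously. Granting it, $\mathrm{Area}(C)=-2\pi\chi(C)/a$, so $-a\,\mathrm{Area}(C)=2\pi\chi(C)$.

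Finally I would combine the two ingredients. By Gauss--Bonnet $\int_C K_C\,dA=2\pi\chi(C)$, so using $-a\,\mathrm{Area}(C)=2\pi\chi(C)$,
\[
\int_C\|\sigma\|^2\,dA=\int_C\big(H_\Omega(\tau)+a\big)\,dA+\big(-a\,\mathrm{Area}(C)-2\pi\chi(C)\big)=\int_C\big(H_\Omega(\tau)+a\big)\,dA\le 0,
\]
because $H_\Omega(\tau)\le -a$ pointwise. As the left-hand side is nonnegative, both inequalities are saturated: $\sigma\equiv 0$, so $C$ is totally geodesic, and simultaneously $H_\Omega(\tau)\equiv -a$, so $C$ is diagonal at every point (this last point is what makes the a priori weaker maximal-rank hypothesis equivalent, for compact $C$, to the diagonal one). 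I note in passing that $K_C\le -a<0$ forces $\chi(C)<0$, so $C$ has genus $\ge 2$ and $\mathrm{Area}(C)>0$, consistent with the formulas above.
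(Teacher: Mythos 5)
This statement is quoted verbatim from Mok's paper \cite{MR1918134}; the present paper offers no proof of it, only the remark that the original argument uses the Poincar\'e--Lelong equation for the generic norm on the complement of the highest characteristic subvariety, and your proposal reconstructs essentially that argument (the nowhere-vanishing section $N(\tau)$ of $K_C^{\otimes r}\otimes L|_C$ forcing degree zero, combined with the Gauss equation and Gauss--Bonnet), so it follows the same route as the cited source rather than the moduli-space/affineness method this paper develops for its own dual Theorem~\ref{diagonal curve}. The step you rightly isolate as the crux, the identity $\mu=-ar$, is indeed the only nontrivial computation and does hold (it amounts to $L^{n/r}\cong K_\Omega^{-1}$ for tube type together with the value of the maximal holomorphic sectional curvature on the diagonal of a maximal polydisk), so once that verification is written out your argument is complete and correctly explains why the maximal-rank hypothesis, a priori weaker than pointwise diagonality, suffices.
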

By using a similar method on higher dimensional submanifolds, Eyssidieux-Mok \cite{MR2127948} generalized the result to any ($H_3$)-holomorphic geodesic cycle, where $H_3$ is defined in \cite[p.10]{MR2127948}, as a condition stronger than total geodesy. The theorem can be stated as follows.
\begin{theorem}\label{EyssidieuxMok}\rm(\textbf{Eyssidieux-Mok, \cite[Theorem~3]{MR2127948}})
Suppose that $\Omega=G_0/K$ is an irreducible bounded symmetric domain and $\Gamma$ is a torsion free discrete subgoup of automorphisms, $D\subset\Omega$ is an ($H_3$)-embedding and there is a $K^{\mathbb{C}}$-invariant hypersurface $\mathcal{Z}_o$ in $Gr(\dim(D),T_o(\Omega))$ such that $[T_o(D)]\notin \mathcal{Z}_o$, if $S\subset \Omega/\Gamma$ is a compact complex submanifold with $[T_x(S)]\notin \mathcal{Z}_x$ at every $x\in S$, then $S$ is an $(H_3)$-holomorphic geodesic cycle.
\end{theorem}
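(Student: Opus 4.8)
The plan is to transplant the Poincaré--Lelong mechanism behind Theorem~\ref{thm polydisk} from the projectivized tangent bundle to the Grassmann bundle of $\dim(D)$-planes, and then to run a curvature-and-degree argument on the compact $S$. Write $g$ for the canonical Kähler--Einstein metric on $\Omega$, normalized to have negative holomorphic sectional curvature, and let it descend to $\Omega/\Gamma$. Since $Gr(\dim(D),T(\Omega/\Gamma))$ is the bundle associated to the isotropy representation and $\mathcal{Z}_o$ is $K^{\mathbb{C}}$-invariant, the fiberwise data $\mathcal{Z}_x$ glue into a subvariety $\mathcal{Z}\subset Gr(\dim(D),T(\Omega/\Gamma))$. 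First I would form the fiberwise Gauss map $\gamma\colon S\to Gr(\dim(D),T(\Omega/\Gamma))$, $x\mapsto[T_x(S)]$; the hypothesis then reads $\gamma(S)\cap\mathcal{Z}=\varnothing$.

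Next comes the construction of a nowhere-vanishing section. Because $\mathcal{Z}_o$ is $K^{\mathbb{C}}$-invariant, its defining polynomial is a relative invariant transforming by a character of $K^{\mathbb{C}}$, and hence determines a Hermitian holomorphic line bundle $\mathcal{L}$ on the Grassmann bundle together with a canonical holomorphic section cutting out $\mathcal{Z}$. Pulling back by $\gamma$ yields a line bundle $\gamma^{*}\mathcal{L}$ on $S$ and a holomorphic section $s$ with $\mathrm{div}(s)=\gamma^{-1}(\mathcal{Z})$. The avoidance $\gamma(S)\cap\mathcal{Z}=\varnothing$ says exactly that $s$ is nowhere zero.

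The heart of the argument is then a curvature identity. Equipping $\gamma^{*}\mathcal{L}$ with the metric induced by $g$, I would compute its Chern form by pulling back the curvature of the tautological sub- and quotient bundles on the Grassmann bundle along $\gamma$; by the Gauss--Codazzi equations this Chern form splits as a fixed multiple of a restricted ambient-curvature form $\rho$ plus a term $\Phi(\sigma)$ that is a sign-definite quadratic expression in the second fundamental form $\sigma$ of $S$ in $\Omega/\Gamma$. Because $s$ is nowhere zero, Poincaré--Lelong gives $c_1(\gamma^{*}\mathcal{L},h)=\tfrac{\sqrt{-1}}{2\pi}\partial\bar\partial\log|s|^{2}$ with no residual current; pairing with $\omega_S^{\dim(D)-1}$, where $\omega_S$ is the restricted Kähler form, and using that $S$ is compact and $\omega_S$ closed, the exact term integrates to zero. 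Thus $\int_S \rho\wedge\omega_S^{\dim(D)-1}$ and $\int_S\Phi(\sigma)\wedge\omega_S^{\dim(D)-1}$ must balance. Negativity of the curvature of $\Omega$ makes the first integral nonpositive while $\Phi(\sigma)\ge 0$, so both integrands vanish pointwise; in particular $\sigma\equiv 0$ and $S$ is totally geodesic in $\Omega/\Gamma$.

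It remains to upgrade total geodesy to the $(H_3)$-cycle structure: a totally geodesic $S$ is locally a totally geodesic complex submanifold whose tangent plane at each point avoids $\mathcal{Z}_x$, hence is modeled on $T_o(D)$; since $D$ is an $(H_3)$-embedding this forces the local model, and therefore $S$ itself, to be an $(H_3)$-holomorphic geodesic cycle. I expect the main obstacle to be the curvature computation of the previous step: one must identify the line bundle $\mathcal{L}$ and its natural metric precisely, express the pulled-back Chern form through $\sigma$, and verify the crucial \emph{sign} so that $\rho\le 0$ and $\Phi(\sigma)\ge 0$ hold simultaneously. In the curve case of Theorem~\ref{thm polydisk} this reduces to the single determinantal norm $N$ and is transparent; in the present Grassmannian generality one must control the full curvature of the tautological bundles along the Gauss map and check that $\Phi(\sigma)$ is exactly the nonnegative obstruction whose vanishing yields total geodesy.
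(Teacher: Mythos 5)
A point of order first: the paper does not prove this statement. Theorem~\ref{EyssidieuxMok} is quoted from Eyssidieux--Mok \cite{MR2127948} as background, and the paper's own results (Theorem~\ref{diagonal curve} and the Main Theorem) are \emph{dual} statements for compact Hermitian symmetric spaces, proved by a completely different mechanism (a holomorphic map from $S$ into an affine moduli space of standard models). So there is no in-paper proof to compare against; your proposal can only be measured against the original source, whose strategy the paper describes as ``a similar method'' to the Poincar\'e--Lelong argument of Theorem~\ref{thm polydisk}. At the level of outline you have reconstructed that strategy correctly: Gauss map into the Grassmann bundle, a line bundle with canonical section cut out by the $K^{\mathbb{C}}$-relative invariant defining $\mathcal{Z}_o$, Poincar\'e--Lelong applied to the nowhere-vanishing pullback, and a sign argument forcing $\sigma\equiv 0$.

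As a proof, however, the proposal has a genuine gap, and you have located it yourself without closing it: the decomposition $c_1(\gamma^{*}\mathcal{L},h)=\rho+\Phi(\sigma)$ with $\rho\le 0$ and $\Phi(\sigma)\ge 0$ is asserted rather than established, and it cannot hold for an arbitrary $K^{\mathbb{C}}$-invariant hypersurface and an arbitrary embedding $D\subset\Omega$. This is exactly where the $(H_3)$ hypothesis must be used --- it is what matches the character by which the relative invariant transforms against the curvature of the tautological bundles so that the ambient term and the second-fundamental-form term acquire coherent signs --- yet your argument only invokes $(H_3)$ in the final ``upgrade'' step, which is too late. That last step is also incomplete on its own terms: the hypothesis $[T_x(S)]\notin\mathcal{Z}_x$ does not single out the $K^{\mathbb{C}}$-orbit of $[T_o(D)]$, since the complement of $\mathcal{Z}_o$ in $Gr(\dim(D),T_o(\Omega))$ may contain tangent planes of totally geodesic submanifolds not modeled on $D$; and being an $(H_3)$-holomorphic geodesic cycle is, as the paper stresses, strictly stronger than being totally geodesic, so it does not follow from ``totally geodesic with tangent planes avoiding $\mathcal{Z}_x$'' without further argument.
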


\par The gap rigidity problem was originally studied by Eyssidieux-Mok \cite{MR1369135} in complex topology, i.e. in differential geometric sense, which is a weaker form of gap phenomenon. More precisely, the gap rigidity holds in the sense of complex topology for a pair of bounded symmetric domains $(\Omega, D;i)$
if a compact complex submanifold $S\subset \Omega/\Gamma$ modelled on $(\Omega, D;i)$ (roughly speaking, locally approximated by $i(D)$) with uniformly sufficiently small norm of the second fundamental form is necessarily totally geodesic. Simple example like diagonal embedding of disk into polydisk was considered.
Examples for gap rigidity concerning period domains from Hodge Theory were studied in \cite{MR1468929}\cite{MR1714821}.

\par The dual analogy of the gap rigidity problem can be easily formualted. Let $M=G/P$ be an irreducible compact Hermitian symmetric space where $G$ is a connected complex simple Lie group and $P$ is a maximal parabolic subgroup. Assume that $i:X\hookrightarrow M$ is an equivariant geodesic embedding and there is a $P$-invariant Zariski open subset $\mathcal{O}_o\subset Gr(\dim(X),T_o(M))$ such that $[T_o(X)]\in \mathcal{O}_o$, we say gap rigidity holds for the pair of Hermitian symmetric spaces $(M,X,i)$ (in Zariski topology) if for any compact complex submanifold $S$ with tangent space $[T_x(S)]$ lifting to an element in $\mathcal{O}_o$ for every point $x\in S$,  $S$ must be some standard model of $X$ where the standard models of $X$ are defined as $g\circ i(X) \subset M$ for all $g\in G$.

\par  In this article,
we are going to prove that the gap rigidity holds for $(M,X,i)$ where $M$ is an irreducible compact Hermitian symmetric space of tube type and $X$ is a diagonal curve in a maximal polysphere, giving a dual analogy for Theorem 1 in \cite{MR1918134}. We have 
\begin{theorem}\label{diagonal curve}
Let $M$ be an irreducible compact Hermitian symmetric space of tube type with $rank(M)\geq 2$ and $C\subset M$ is a compact curve with generic tangent vectors at every point, i.e. at every point $x\in C$ there are standard models of diagonal curves in maximal polyspheres tangent to $C$ at $x$, then $C$ itself must be a standard model.
\end{theorem}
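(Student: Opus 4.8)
The plan is to dualize Mok's Poincar\'e--Lelong argument: from the tube-type determinant I build a canonical nowhere-vanishing section along $C$, show that its logarithmic curvature form equals a definite-sign multiple of the squared norm of the second fundamental form, and then use compactness of $C$ to force the second fundamental form to vanish; the totally geodesic curve that results is then identified with a standard model. To set this up I first recall the Jordan-algebraic structure carried by $T_o(M)$. Since $M=G/P$ is of tube type of rank $r$, the fibre $T_o(M)$ carries a $P$-semi-invariant homogeneous ``norm'' (determinant) polynomial $N$ of degree $r$ whose non-vanishing locus is exactly the set of maximal-rank vectors; equivalently, the $P$-orbit of the tangent direction of a diagonal curve is $\{N\neq 0\}$, which reconciles the two formulations of the hypothesis. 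Globalizing by equivariance, $N$ becomes a holomorphic section of $\mathrm{Sym}^{r}(T^{*}M)\otimes\mathcal{L}$, where $\mathcal{L}$ is the homogeneous (ample) line bundle attached to the character carried by $N$, and $\mathcal{L}$ inherits a $G$-invariant Hermitian metric.

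Next I restrict $N$ to the tangent directions of $C$. At every $x\in C$ the hypothesis gives $N(T_xC)\neq 0$, so contracting the bundle map $\mathrm{Sym}^{r}(TM)\to\mathcal{L}$ with $(T_xC)^{\otimes r}$ produces a \emph{nowhere-vanishing} holomorphic section $s$ of $E:=(T^{*}C)^{\otimes r}\otimes\mathcal{L}|_{C}$, whose norm I compute using the metric induced from $M$ on $TC$ and the invariant metric on $\mathcal{L}$. Since $s$ has no zeros, the Poincar\'e--Lelong formula gives
$$dd^{c}\log\|s\|^{2}=-\Theta(E)=r\,\Theta_{\mathrm{ind}}(TC)-\Theta(\mathcal{L}),$$
and the Gauss equation rewrites the induced curvature of the tangent line as $\Theta_{\mathrm{ind}}(TC)=\bigl(H^{M}(\xi)-\|\sigma\|^{2}\bigr)\,\omega_{C}$ up to a fixed positive normalization, where $\xi$ is the unit tangent, $H^{M}$ is the ambient holomorphic sectional curvature, $\sigma$ is the second fundamental form, and $\omega_{C}$ is the induced area form.

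The heart of the matter is a pointwise curvature identity. Using the explicit curvature tensor of the compact Hermitian symmetric space $M$ together with the tube-type structure, one shows that along maximal-rank directions the invariant metric on $\mathcal{L}$ is calibrated so that $\Theta(\mathcal{L})|_{C}=r\,H^{M}(\xi)\,\omega_{C}$. Granting this, the ambient-curvature contributions cancel and
$$dd^{c}\log\|s\|^{2}=-\,r\,\|\sigma\|^{2}\,\omega_{C}\le 0.$$
Integrating over the compact curve $C$ the left-hand side vanishes, so the non-positive integrand must be identically zero, whence $\sigma\equiv 0$ and $C$ is totally geodesic. I expect this curvature identity---the exact matching of the determinant line bundle with $r$ times the holomorphic sectional curvature precisely in the maximal-rank directions---to be the main obstacle, since it is where the tube-type hypothesis and the condition $\mathrm{rank}(M)\ge 2$ must genuinely enter, and where the Einstein--K\"ahler curvature has to be expressed in terms of $\log N$ and evaluated carefully.

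Finally, once $\sigma\equiv 0$ the identification is routine. A complete totally geodesic complex curve tangent to a maximal-rank vector $e=e_{1}+\cdots+e_{r}$ is the orbit of the $\mathfrak{sl}_{2}$-triple $(e,[e,\bar e],\bar e)$ determined by the idempotent decomposition, which is exactly the diagonal curve in a maximal polysphere. Since $C$ is compact and coincides with this complete totally geodesic curve in a neighbourhood of any of its points, $C$ equals a $G$-translate of the diagonal curve, i.e. a standard model. If $C$ is only assumed to be a possibly singular compact curve with maximal-rank tangents, one runs the whole argument on its normalization and recovers the same conclusion for the image.
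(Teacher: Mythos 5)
Your plan founders on the ``calibration identity'' $\Theta(\mathcal{L})|_{C}=r\,H^{M}(\xi)\,\omega_{C}$, and the failure is not a technical gap to be filled but a sign obstruction intrinsic to the compact dual. Since $\mathcal{L}$ is a homogeneous line bundle and you equip it with the invariant metric, $\Theta(\mathcal{L})$ is a $G_c$-invariant real $(1,1)$-form on the irreducible space $M$, hence a \emph{constant} multiple $c\,\omega_M$ of the K\"ahler form; restricted to $C$ this is $c\,\omega_C$. By contrast, $H^{M}(\xi)$ is genuinely non-constant on the open set of unit maximal-rank directions: writing $\xi=\sum_i a_i e_{\alpha_i}$ in normal form with $\sum|a_i|^2=1$, one has $H^{M}(\xi)=2\sum_i|a_i|^4$, which ranges over $[2/r,2)$ and attains its minimum exactly at the balanced directions tangent to standard models. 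Testing your formula on a standard model (where $\sigma\equiv 0$ and $\|s\|$ is constant) forces $c=r\,H^{M}(\xi_{\mathrm{bal}})=2$, and then for a general curve $dd^{c}\log\|s\|^{2}=\bigl(2r\sum_i|a_i|^4-2-r\|\sigma\|^{2}\bigr)\,\omega_{C}$. The two error terms now come with \emph{opposite} signs: the ambient term $2r\sum|a_i|^4-2$ is $\geq 0$ (it vanishes only at balanced directions) while $-r\|\sigma\|^2\leq 0$. Integrating over $C$ merely yields $\int_C(2r\sum|a_i|^4-2)\,\omega_C=\int_C r\|\sigma\|^2\,\omega_C$, which is consistent with nonzero $\sigma$ and gives no rigidity. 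This is precisely the point where Mok's noncompact argument uses negativity of the ambient curvature (there the holomorphic sectional curvature is \emph{maximized} at balanced maximal-rank directions, so both terms are $\leq 0$ and the vanishing integral kills each); the sign flips under duality and the Poincar\'e--Lelong scheme does not transplant. What your construction does legitimately yield is the degree identity $r(2g-2)+2\deg_{\mathcal{O}(1)}C=0$ from the triviality of $(T^{*}C)^{\otimes r}\otimes\mathcal{L}|_{C}$, forcing $C$ to be a rational curve of degree $r$ --- a nontrivial but much weaker conclusion.

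The paper avoids this entirely by a different mechanism: it shows (Lemma 3.2, via bijectivity of $v\mapsto[\eta,[v,\eta]]$ on $\mathfrak{m}^-$ for a maximal-rank $\eta$, checked case by case including $E_7$) that at each point of $C$ there is a \emph{unique} standard model tangent to second order, producing a holomorphic map from $C$ to the moduli space $G/H$ of standard models; it then proves $G/H$ is affine by showing the stabilizer $H$ is reductive (a totally real $K$-orbit of half dimension, two double fibrations, and a dimension count give $\dim_{\mathbb{C}}H=\dim_{\mathbb{R}}H_0$), so the map is constant and $C$ coincides with a single standard model. If you want to salvage a curvature-theoretic route you would need a pointwise, not just integrated, control of $\|\sigma\|^2$ by the defect $2r\sum|a_i|^4-2$, and no such inequality is available; I would recommend adopting the moduli-space argument instead.
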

The theorem is not true in general when $M$ is a non-tube type irreducible compact Hermitian symmetric space. For instance, for the Grassmannian $G(2,3)$ (denoting all 2-planes in a 5-dimensional complex vector space), there is a standard embedding from $\mathbb{P}^1\times \mathbb{P}^2$ to $G(2,3)$. We take the graph of the Veronese embedding from $\mathbb{P}^1$ to $\mathbb{P}^2$, then this gives a compact curve with generic (rank two) tangent vectors which is not a standard model of the diagonal curve. 

Our proof is to construct a holomorphic map from the compact submanifold $S$ to the Chow space of standard models and show that the Chow space is affine algebraic. For each point $x$ on $S$, there is a family of standard models tangent to $S$ to the first order, we only need to show that there is a unique standard model tangent to $S$ to the second order, then the holomorphic map can be constructed. The affine-algebraicity comes from the fact that the isotropy group of the Chow space is reductive by a double fibration construction and dimension counting argument.

\par Motivated by the proof, we find  the affine-algebraicity of the Chow space of certain higher dimensional submanifolds satisfying the so-called $(H_2)$-condition (stronger than total geodesy), we obtain
\begin{theorem}\label{generalaffineness}
Suppose that $M=G/P$ is an irreducible compact Hermitian symmetric space with rank $\geq 2$. $X\subset M$ is a totally geodesic equivariant $(H_2)$-subspace with respect to some choice of canonical K\"ahler-Einstein metric on $M$, where  $\dim(M)=n,\dim(X)=m$. And there is a $P$-invariant hypersurface $\mathcal{Z}_o$ in $Gr(m,T_o(M))$ such that $P.[T_o(X)]\subset \mathcal{O}_o:=Gr(m,T_o(M))-\mathcal{Z}_o$. Then Chow space of the standard models, i.e. the $G$-orbit $G.[X]\subset Chow(M)$ is affine algebraic.
\end{theorem}
This can be considered as a contrary to the Chow space of a given smooth Schubert variety in $M$, which is projective algebraic. In curve case as an example, a smooth Schubert variety is a rational curve of minimal degree with respect to the ample generator of $Pic(M)$ which has rank one tangent vectors. While in our situation the diagonal curves have tangent vectors of maximal rank. These two 'extreme cases' have such interesting properties.

We then give a weaker version of gap rigidity for such $(H_2)$-subspaces.
\begin{corollary}\label{general}
Under the setting in Theorem \ref{generalaffineness}, if $S\subset M$ is a compact complex submanifold and by some lifting $[T_x(S)] \in P.[T_o(X)]$ at every $x\in S$, and moreover at every point $x\in S$ there are standard models of $X$ tangent to $S$ to the second order at $x$, then $S$ itself is a standard model.

\end{corollary}

\begin{remark}\label{reducible}
$(H_2)$-condition is required to satisfy a Lie bracket generating condition (see Section \ref{weakergap}). 
We know if $X$ is $H_2$ in $M=M_1\times \cdots \times M_k$, then $X$ is $H_2$ in each factor $M_i (1\leq i\leq k)$. 
For convenience we assume that $M=G/P$ is irreducible in our  discussion in this article. In fact the case when $M$ is reducible can be also proved under similar formulation following similar argument.
\end{remark}

\begin{remark}
Corollary \ref{general} is a weaker version of gap rigidity. There is a restriction on the tangent spaces of $S$, they are required to be tangent to some standard models to the second order, not just contained in the Zariski open set $\mathcal{O}_o \subset Gr(m,T_o(M))$. If $P.[T_o(X)]$ is coincident with $\mathcal{O}_o$ and the second order condition is automatically satisfied as the case where we consider diagonal curve in a tube type ambient space, then it is the same as general gap rigidity. And a simple example satisfying the (weaker) gap rigidity for reducible ambient space is the diagonal embedding of a Riemann sphere $X=\mathbb{P}^1$ in a polysphere $M=(\mathbb{P}^1)^r$, where the $P$-action on $T_o(M)$ is actually the $\mathbb{C}^*$-action on tangent vectors of  each sphere and the $P$-invariant hypersurface in the projectivized tangent space can be chosen as a hyperplane.
\end{remark}

\begin{remark}
We are still curious about whether the second order tangency can be removed in the condition of the Corollary \ref{general}. When $X$ is a curve, we will find that the second order tangency is an empty condition by our proof, but for general case it is still not known. Since we consider the submanifold $S$ globally, the second order condition may be implicitly contained in the global condition. In fact we will find that when $M$ is a hyperquadric and $X$ is a smooth linear section subquadric, the second order tangency is actually redundant. As in this case if a compact submanifold $S$ is tangent to the some standard models of smooth linear section subquadrics only to the first order at every point, then $S$ is a submanifold with splitting tangent sequence in $M$ where the direct sum $T(M)=T(S)\oplus N_{o,S|M}$ is given by the annihilator with respect to the holomorphic conformal structure of the hyperquadric. Therefore by the result of Jahnke (cf. \cite[Theorem~4.7]{MR2190340}) we know $S$ itself must be a smooth linear section subquadric. From this example we may conjecture that the second order tangency is not necessary, but we have no more evidence until now.
\end{remark}
We know there are four classical types and two exceptional types ($E_6, E_7$ types) for irreducible compact Hermitian symmetric spaces. In this article we use the following notations for the classical types. $G(p,q)$ (Type I) denotes the Grassmannian of $p$-dimensional subspace in a $(p+q)$-dimensional complex vector spaces; $G^{II}(n,n)$ (Type II) and $G^{III}(n,n)$ (Type III) denote the orthogonal Grassmannian and Lagrangian Grassmannian respectively;  $Q^n$ (Type IV) denotes the hyperquadric. The article is organized as follows. In Section \ref{prel} we give some preliminaries on Hermitian symmetric spaces and $(H_2)$-subspaces. In Section \ref{gapforcurves} we give a proof of Theorem \ref{diagonal curve}. In Section \ref{weakergap} we give a proof of Theorem \ref{generalaffineness} and Corollary \ref{general}.

\section{Preliminaries}\label{prel}
\subsection{Hermitian symmetric spaces}
We write an irreducible compact Hermitian symmetric space of rank $r\geq 2$ as $M=G/P=G_c/K$ where $G$ is a connected complex simple Lie group, $P$ is a maximal parabolic subgroup, $G_c$ is the isometry group of $M$ with respect to a canonical choice of K\"{a}hler-Einstein metric on $M$ which is a compact real form of $G$, $K$ is a maximal compact subgroup of $G_c$. Let $\mathfrak{g}=Lie(G)$, then the Harish-Chandra decomposition can be written as $\mathfrak{g}=\mathfrak{m}^++\mathfrak{l}^\mathbb{C}+\mathfrak{m}^-$ where $\mathfrak{l}=Lie(K)$ and $\mathfrak{l}^\mathbb{C}+\mathfrak{m}^-=Lie(P)$. The holomorphic tangent space at a reference point $o$ is $T_{o}(M)\cong\mathfrak{m}^+$. Considering the isotropy action of $P$ on the projectivized tangent space $\mathbb{P}T_o(M)$, since we have Levi decomposition $P=K^{\mathbb{C}}M^-$ where $M^-$ is the unipotent radical and acts trivially on $T_o(M)$, $P$-orbits on $\mathbb{P}T_o(M)$ are the same as $K^{\mathbb{C}}$-orbits and similarly they are the same on $Gr(p,T_o(M))$ for $p<\dim(M)$ which denotes all $p$-planes in the tangent space $T_o(M)$. It is well-known that there are precisely $r$ $K^{\mathbb{C}}$-orbits on $\mathbb{P}T_o(M)$ according to the rank of the vector $\eta\in T_o(M)$, which actually comes from the Polydisk and Polysphere Theorem and the Restricted Root Theorem (see Theorem \ref{RRR}).
\begin{theorem}[The Polydisk and Polysphere Theorem]
Suppose that $(X_0,g_0)$ is a rank $r$ irreducible Hermitian symmetric space of noncompact type, $X_0=G_0/K$. Then there exists a totally geodesic complex submanifold $(D,g_0|_D)$ isometric to a a product of $r$ disks equipped with Poincar\'{e} metric and the $K$-action on $D$ exhausts $X_0$, i.e. $X_0=\bigcup_{k\in K}kD$. And for the dual case $(X_c, g_c)$ there exists a totally geodesic submanifold $S$ in $X_c$ isometric a a product of $r$ Riemann spheres equipped with Fubini-Study metric. Moreover $D$ is contained in $S$ through the Borel embedding of $X_0$ in $X_c$. 
\end{theorem}

\par More precisely we fix a Cartan subalgebra $\mathfrak{h}^{\mathbb{C}}$ of $\mathfrak{l}^{\mathbb{C}}$ and give a root system $\Delta\subset( \mathfrak{h}^{\mathbb{C}})^*$ for $\mathfrak{g}$. Let $z$ be a central element of $\mathfrak{l}$ such that $ad(z)$ gives the almost complex structure of $M$, and let $iy\in i\mathfrak{h}$ be in the interior of a Weyl chamber whose closure contains $iz$, then we can define a positive root system $\Delta^+=\{\alpha\in \Delta:\alpha(iy)>0\}$ and we denote the set of negative roots by $\Delta^-$ correspondingly. Then we can write $\mathfrak{m}^+=\sum_{\alpha \in \Delta^+_{M}}\mathfrak{g}_{\alpha}$, $\mathfrak{m}^-=\sum_{\alpha \in \Delta^-_{M}}\mathfrak{g}_{\alpha}$ where $\mathfrak{g}_{\alpha}$ is the corresponding root space. The roots in the set $\Delta_{M}^+(\Delta_{M}^-)$ are called noncompact positive(negative) roots, and the other roots in $\Delta^+(\Delta^-)$ are called compact positive(negative) roots, for which we use $\Delta^{+}_K(\Delta^-_K)$ to denote the set of them. For each root $\alpha$,  $e_{\alpha}$ denotes the normalized root vector of $\alpha$ such that $[e_{\alpha},e_{-\alpha}]=H_{\alpha}$ where $H_{\alpha}$ is the dual element of $\alpha$ in $\mathfrak{h}^{\mathbb{C}}$ with respect to the Killing form of $\mathfrak{g}$, and let $h_\alpha$ be the corresponding coroot. Recall that two roots $\alpha, \beta$ are said to be strongly orthogonal to each other if $\alpha\pm\beta$ are not roots.
There is a maximal set of strongly orthogonal positive noncompact roots $\Pi=\{\alpha_1,...\alpha_r\}\subset \Delta^+_M$ constructed from the highest root. Then each representative for the $K^{\mathbb{C}}$-action is $\sum_{i=1}^pe_{\alpha_i}$ for $1\leq p\leq r$. For a nonzero holomorphic tangent vector $\eta\in T_o(M)$, if $Adk.\eta=\sum_{i=1}^pe_{\alpha_i}$ for some $k\in K^{\mathbb{C}}, 1\leq p\leq r$, then $p$ is called the rank of $\eta$.

\par  We say a vector $\eta\in T_o(M)$ is generic if $\eta$ is of rank $r$. In our case, each tangent vector of a diagonal curve in a maximal polysphere is generic. We say an irreducible bounded symmetric domain is of tube type if it is biholomorphic to a tube domain. By case-by-case study, in \cite{MR1918134} they observed that $K^{\mathbb{C}}$-orbit of a maximal rank vector is equal to the complement of a hypersurface (the so-called highest characteristic subvariety) in $\mathbb{P}T_o(M)$ if and only if $M$ is dual to an irreducible bounded symmetric domain of tube type, i.e. it is one of the following.
\begin{enumerate}
\item $G(n,n)$ with $n\geq 2$;
\item $G^{II}(n,n)$ with $n$ even and $n\geq 4$;
\item  $G^{III}(n,n)$ with $n\geq 2$;
\item  $Q^n$ with $n\geq 3$;
\item $E_7$-type.
\end{enumerate}
We will call them irreducible compact Hermitian symmetric spaces of tube type. The Restricted Root Theorem (cf.\cite{MR0161943}) gives a description on the roots restricted to the real vector space $\mathfrak{h}^-=\sum_{\alpha\in \Pi}\sqrt{-1}H_{\alpha}\mathbb{R}$. corresponding to tube type and non-tube type Hermitian symmetric spaces.
\begin{theorem}[The Restricted Root Theorem]\label{RRR}
Let $\rho$ denote the restriction of roots from $\mathfrak{h}^{\mathbb{C}}$ to $\mathfrak{h}^-$, identify the elements in $\Delta$ with their $\rho$-image, then either $\rho(\Delta)\cup\{0\}=\{\pm\frac{1}{2}\alpha_i\pm\frac{1}{2}\alpha_j:1\leq i,j\leq r\}$ or $\rho(\Delta)\cup\{0\}=\{\pm\frac{1}{2}\alpha_i\pm\frac{1}{2}\alpha_j, \pm\frac{1}{2}\alpha_i:1\leq i,j\leq r\}$. Accordingly $\rho(\Delta^+_M)=\{\frac{1}{2}\alpha_i+\frac{1}{2}\alpha_j:1\leq i,j\leq r\}$ or $\rho(\Delta^+_M)=\{\frac{1}{2}\alpha_i+\frac{1}{2}\alpha_j, \frac{1}{2}\alpha_i:1\leq i,j\leq r\}$. Moreover, all $\alpha_i$ have the same length and the subgroup of the Weyl group of $G$ preserving the compact roots and fixing $\Pi$ as a set induces all signed permutations $\alpha_i\rightarrow \pm\alpha_j$ of $\Pi$.
\end{theorem}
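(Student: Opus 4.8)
The plan is to read off the restricted root system directly from the strongly orthogonal set $\Pi=\{\alpha_1,\dots,\alpha_r\}$, using only two structural features of the Hermitian case: that $\mathfrak{m}^+$ is abelian, and that $\mathrm{ad}(z)$ induces a $\mathbb Z$-grading $\mathfrak{g}=\mathfrak{m}^-\oplus\mathfrak{l}^{\mathbb C}\oplus\mathfrak{m}^+$ with $\mathfrak m^{\pm}$ in degree $\pm1$ and no other degrees present. For $\gamma\in\Delta$ set $n_i(\gamma)=\gamma(h_{\alpha_i})\in\mathbb Z$; since the $\alpha_i$ are mutually orthogonal and restrict to a basis of the dual of $\mathfrak{h}^-$, the restriction is $\rho(\gamma)=\frac12\sum_i n_i(\gamma)\,\alpha_i$ under the identification in the statement. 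Thus everything reduces to determining the admissible integer vectors $(n_1(\gamma),\dots,n_r(\gamma))$, and the tube/non-tube dichotomy will be read off from a single combinatorial feature of these vectors.

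First I would pin down the noncompact positive roots. If $\gamma\in\Delta_M^+$ then $e_\gamma,e_{\alpha_i}\in\mathfrak{m}^+$ and abelianness gives $[e_{\alpha_i},e_\gamma]=0$, so $\gamma+\alpha_i\notin\Delta$; hence $\gamma$ is the top of its $\alpha_i$-string and $n_i(\gamma)\ge 0$ for every $i$. The grading bounds the string from below, since $\gamma-3\alpha_i$ would lie in degree $-2$, which is impossible, so $n_i(\gamma)\le 2$. This already forces $n_i(\gamma)\in\{0,1,2\}$. The decisive step is to show that at most two of the $n_i(\gamma)$ are nonzero and to exclude the pattern $(2,1)$. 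I would obtain this by a ``reflect-down'' argument: if three of the entries were positive, or if one entry equalled $2$ while another was positive, then subtracting the relevant $\alpha_i$ one at a time produces a genuine root of degree $-1$, i.e. an element of $\mathfrak m^-$, whose negative is a noncompact positive root carrying a strictly negative $n_k$, contradicting $n_k\ge 0$. Carrying this out leaves exactly $\rho(\gamma)=\alpha_i$ (one entry $2$), $\rho(\gamma)=\frac12(\alpha_i+\alpha_j)$ (two entries $1$), or $\rho(\gamma)=\frac12\alpha_i$ (one entry $1$), which are precisely the two claimed possibilities for $\rho(\Delta_M^+)$.

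The compact roots are handled by the same method: the grading forces $n_i(\beta)\in\{-1,0,1\}$ for $\beta\in\Delta_K$, and applying the reflect-down argument to $\beta$ and to $-\beta$ (both compact) shows that a nonzero $\rho(\beta)$ has exactly one entry $+1$ together with at most one entry $-1$, so $\rho(\beta)=\frac12(\alpha_i-\alpha_j)$ or $\rho(\beta)=\frac12\alpha_i$, while the remaining compact roots restrict to $0$. Assembling the noncompact and compact contributions gives $\rho(\Delta)\cup\{0\}\subseteq\{\pm\frac12\alpha_i\pm\frac12\alpha_j\}$, with the short roots $\pm\frac12\alpha_i$ occurring exactly when some root restricts to a single $\pm\frac12\alpha_i$; this is the tube versus non-tube dichotomy, reduced type $C_r$ against non-reduced type $BC_r$. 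That the inclusion is an equality, and that these two patterns exhaust the possibilities, follows from irreducibility of $\mathfrak g$, since the restricted system is then irreducible of rank $r$ and the connecting roots $\frac12(\alpha_i-\alpha_j)$ must all occur.

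For the last assertion, the restricted root system being $C_r$ or $BC_r$ has Weyl group the full group of signed permutations; standard structure theory identifies this with the image in $\mathrm{GL}(\mathfrak h^-)$ of the stabilizer of $\mathfrak h^-$ in $W(\mathfrak g,\mathfrak h)$, and the transpositions $\alpha_i\leftrightarrow\alpha_j$ already arise from the Weyl group of $K$ (which preserves $\Delta_K$ and $\mathfrak m^+$), so the subgroup preserving the compact roots surjects onto all signed permutations $\alpha_i\mapsto\pm\alpha_j$. Transitivity of this action on $\{\pm\alpha_i\}$ together with the isometry property of Weyl elements then forces the $\alpha_i$ to share a common length. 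The main obstacle I anticipate is the combinatorial heart of the second and third paragraphs: making the reflect-down argument completely rigorous so that ``at most two nonzero $n_i$'' and the exclusion of $(2,1)$ fall out uniformly, thereby ruling out the a priori possible $B_r$- or $D_r$-type restricted configurations without reverting to a case-by-case inspection of the simple Lie algebras.
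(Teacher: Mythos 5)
The paper does not actually prove this statement: it is quoted as a classical result (Moore's Restricted Root Theorem) with only a citation to \cite{MR0161943}, so there is no internal proof to compare against. Your sketch follows the standard route for this theorem, exploiting that $\mathfrak{m}^+$ is abelian and that $\mathrm{ad}(z)$ grades $\mathfrak{g}$ in degrees $-1,0,1$; the ``reflect-down'' computation correctly pins $n_i(\gamma)\in\{0,1,2\}$ for noncompact positive $\gamma$, excludes three nonzero entries as well as the pattern with one entry equal to $2$ and another nonzero, and handles compact roots symmetrically. That combinatorial core is sound.

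Three points need repair before this is a complete proof. First, you never exclude $\rho(\gamma)=0$ for $\gamma\in\Delta_M^+$, which the stated form of $\rho(\Delta_M^+)$ requires; this is exactly where maximality of $\Pi$ enters (if $n_i(\gamma)=0$ for all $i$, then since $q=0$ forces $p=0$ in each $\alpha_i$-string, $\gamma$ is strongly orthogonal to every $\alpha_i$, contradicting maximality), and maximality is never invoked in your argument. Second, the passage from the inclusion $\rho(\Delta)\cup\{0\}\subseteq\{\pm\frac12\alpha_i\pm\frac12\alpha_j,\pm\frac12\alpha_i\}$ to the claimed equalities, and to the all-or-nothing dichotomy for the short restrictions $\pm\frac12\alpha_i$, leans on the restricted system being an irreducible root system of rank $r$; that is itself a nontrivial input (it needs the Cayley-transform identification of $\mathfrak{h}^-$ with a maximal abelian subspace of $\mathfrak{p}_0$, or an equivalent argument), and as written it is asserted rather than derived. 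Third, for the Weyl-group clause note that $s_{\alpha_i}$ does \emph{not} preserve the compact roots (it sends a compact $\beta$ with $n_i(\beta)=\pm1$ to a root of degree $\mp1$), so the sign changes $\alpha_i\mapsto-\alpha_i$ must be realized by other elements, in effect by representatives in $N_K(\mathfrak{h}^-)$; your appeal to ``standard structure theory'' hides precisely this point, which also carries the equal-length claim. None of these is fatal, but each is a genuine gap relative to a self-contained proof.
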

From the perspective of Restricted Root Theorem, $M$ is of tube type if and only if it is the first case in the theorem.
\subsection{$(H_2)$-embeddings}
The readers may consult \cite{MR0196134} for more details. Suppose we have a pair of semisimple Lie algebras of Hermitian type $(\mathfrak{g}_c, H_0)$, $(\mathfrak{g}_c', H'_0)$ corresponding to two Hermitian symmetric spaces of compact type $G_c/K$, $G_c'/K'$ respectively, where $H_0, H'_0$ are correponding to the central element in the associated maximal compact subgroups $K,K'$ determining the complex structure of $G_c/K$ and $G_c'/K'$ respectively, we call a Lie algebra homomorphism $\rho :\mathfrak{g} \longrightarrow \mathfrak{g}'$ satisfying $\rho\circ ad(H_0)=ad(H'_0)\circ\rho$ an $(H_1)$-homomorphism, injecitive $(H_1)$-homomorphism are one-to-one corresponding to totally geodesic complex submanifolds which are called $(H_1)$-subspaces.
Furthermore if we have $\rho(H_0)=H'_0$, the Lie algebra homomorphism is called $(H_2)$-homomorphsim and injective $(H_2)$-homomorphsims give so-called $(H_2)$-subspaces. There are some simple examples which are $H_1$ but not $H_2$, like $\mathbb{P}^1\times \mathbb{P}^2 \subset G(2,3)$ by standard equivariant embedding. The full classification of $(H_2)$-subspaces in Hermitian symmetric spaces are given in \cite{MR0196134} and \cite{MR0214807}.
In this article we will refer the reader to the table given in \cite[pp.27-28]{MR2127948} for maximal $(H_2)$-subspaces in irreducible Hermitian symmetric spaces. For readers' convenience, we give the table when $M$ is of classical type (with some corrections)
\begin{longtable}{|p{4em}|p{10em}|p{5em}|p{10em}|}
\caption{
	Maximal $(H_2)$-subspaces $X$ of a compact irreducible Hermitian symmetric space of classical type $M$} \\
\hline
$M$  & $X\subset M$  & maximal & Additional conditions 
\\ \hline
$G(p,q)$ & $G(r,s)\times G(p-r,q-s)$ & * & $\frac{r}{s}=\frac{p}{q}$ \\
\hline 
& $G^{II}(n,n)$  &  * & $p=q=n$ \\
\hline  
& $G^{III}(n,n)$  &  * & $p=q=n$ \\
\hline  
& $\mathbb{P}^m$  & $m\equiv 0[2]$ & $p=\binom{m}{r-1}, q=\binom{m}{r}, r\in \mathbb{N}$ \\
\hline  
& $Q^{2\ell}$  &  $\ell \equiv 0[2]$ & $p=q=2^{\ell-1}, \ell\geq 3$ 
\\ \hline

$G^{II}(n,n)$  &$G(r,r)$  & * &$n=2r$
\\ \hline
& $G^{II}(r,r)\times G^{II}(n-r,n-r)$ & * & $n>r$ \\
\hline 
& $\mathbb{P}^m$  & * & $n=\binom{m}{\frac{m+1}{2}},m\equiv 3[4]$ \\
\hline  
& $Q^{2\ell}$  &  * &   
$n=2^{\ell-1}, \ell\equiv 3[4], \ell\geq 3$ \\
\hline  
& $Q^{2\ell-1}$  &  * &   
$n=2^{\ell-1}, \ell\equiv 0, 3[4], \ell\geq 3$ \\  \hline

$G^{III}(n,n)$  &$G(r,r)$  & * &$n=2r$\\
\hline  
& $G^{III}(r,r)\times G^{III}(n-r,n-r)$ & * & $n>r$ \\
\hline 
& $\mathbb{P}^m$  & * & $n=\binom{m}{\frac{m+1}{2}},m\equiv 1[4]$ \\
\hline  
& $Q^{2\ell}$  &  * &   
$n=2^{\ell-1}, \ell\equiv 1[4], \ell\geq 3$ \\
\hline  
& $Q^{2\ell-1}$  &  * &   
$n=2^{\ell-1}, \ell\equiv 1, 2[4], \ell\geq 3$ \\
\hline  

$Q^{2\ell}$ &  $Q^{2\ell-1}$ & *& $\ell \geq 3$\\
\hline 

$Q^{2\ell-1}$ &  $Q^{2\ell-2}$ & *& $\ell \geq 3$ \\
\hline
\end{longtable}

\section{Gap rigidity for diagonal curves}\label{gapforcurves}
\par In this section we give the proof of Theorem \ref{diagonal curve}. Let $M=G/P=G_c/K$ be an irreducible compact Hermitian symmetric space of tube type with rank $r\geq 2$ and $C\subset M$ is a compact curve with generic (rank $r$) tangent vectors at every point. Our proof is to construct a holomorphic map from the compact curve $C$ to the Chow space of standard models of diagonal curves in maximal polyspheres and show that the Chow space is affine algebraic, then the map should be constant and $C$ itself must be some standard model. For each point $x$ on $C$, there is a family of standard models tangent to $C$ to the first order, we will show that there is a unique standard model tangent to $C$ to the second order, which gives the holomorphic map. The affine-algebraicity comes from the fact that the isotropy group of the Chow space is reductive by a double fibration construction and dimension counting argument. 

\subsection{Construction of a holomorphic map from the curve to the Chow space of standard models.}

We will give a proof for the existence and uniqueness of a standard model which is tangent to $C$ at one point $o$ to the second order. Let $M^-=\exp(\mathfrak{m}^-)$, the action given by $M^-$ will transform the standard model fixing the tangent space at $o$. 
For $\eta\in T_{o}(M)\cong \mathfrak{m}^+$,  we know the curve $\eta(t)=t\eta\subset \mathfrak{m}^+$. Fix $\xi\in \mathfrak{m}^-$, under the $M^-$-action $\exp(\xi)$, the second order term of the curve is given by 
$ \exp\left(\frac{t^2}{2}[\eta,[\xi,\eta]]\right)\cdot o$
(cf. \cite[Lemma 4.3]{MR1198602}). Also the second fundamental form of the standard model after the $M^-$-action is given by \[\sigma(\eta,\eta)=[\eta,[\xi,\eta]] \mod \mathbb{C}\eta\]

Now we fix a root system and a maximal set of strongly orthogonal positive noncompact roots $\Pi=\{\alpha_1,...\alpha_r\}$. The normalized root vectors are denoted by $e_{\alpha_i}$. Considering the case when $\eta=\sum_{i=1}^re_{\alpha_i}$, we can prove the following lemma 
\begin{lemma}{\label{bijection}}
The linear map $H:\mathfrak{m}^- \rightarrow \mathfrak{m}^+$ given by \[H(v)=[\sum_{i=1}^re_{\alpha_i},[v,\sum_{i=1}^re_{\alpha_i}]]\] is a bijection.
\end{lemma}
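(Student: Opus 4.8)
The plan is to first recognize a drastic simplification of $H$. Since $[v,\sum_i e_{\alpha_i}]=-[\sum_i e_{\alpha_i},v]$, writing $\eta=\sum_{i=1}^r e_{\alpha_i}$ we get $H(v)=[\eta,[v,\eta]]=-[\eta,[\eta,v]]=-\mathrm{ad}(\eta)^2(v)$. Thus $H=-\mathrm{ad}(\eta)^2$ restricted to $\mathfrak{m}^-$, and since $\dim\mathfrak{m}^-=\dim\mathfrak{m}^+=\dim M$, it suffices to prove that $\mathrm{ad}(\eta)^2\colon\mathfrak{m}^-\to\mathfrak{m}^+$ is injective, hence bijective. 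So the whole problem reduces to understanding the square of a single raising operator.

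Next I would exhibit an $\mathfrak{sl}_2$-triple governing $\mathrm{ad}(\eta)$. Because the $\alpha_i$ are strongly orthogonal, the $\mathfrak{sl}_2$-triples $(e_{\alpha_i},h_{\alpha_i},e_{-\alpha_i})$ pairwise commute, so after renormalizing the $e_{-\alpha_i}$ to coroot normalization the elements $\eta=\sum_i e_{\alpha_i}$, $\mathsf{H}=\sum_i h_{\alpha_i}$ and $\zeta=\sum_i e_{-\alpha_i}$ form a single $\mathfrak{sl}_2$-triple with $\eta$ as raising operator (the cross terms $[e_{\alpha_i},e_{-\alpha_j}]$, $i\neq j$, vanish by strong orthogonality). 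The crucial computation is the spectrum of $\mathrm{ad}(\mathsf{H})$: on a root space $\mathfrak{g}_\gamma$ its eigenvalue is $\gamma(\mathsf{H})=\sum_i\langle\gamma,\alpha_i^{\vee}\rangle=\sum_i\rho(\gamma)(h_{\alpha_i})$. Invoking the Restricted Root Theorem in the tube case, every $\rho(\gamma)$ has the form $\pm\tfrac12\alpha_a\pm\tfrac12\alpha_b$, so by strong orthogonality and equal length this eigenvalue equals $\pm1\pm1\in\{-2,0,2\}$; one then checks that $\mathfrak{m}^+$ is exactly the $(+2)$-eigenspace, $\mathfrak{m}^-$ the $(-2)$-eigenspace, and $\mathfrak{l}^{\mathbb{C}}$ the $0$-eigenspace. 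This is precisely the point where the tube-type hypothesis is indispensable: in the non-tube case there are roots restricting to $\pm\tfrac12\alpha_a$, producing odd eigenvalues $\pm1$ and breaking the argument.

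Finally I would run $\mathfrak{sl}_2$ representation theory. Since $\mathrm{ad}(\mathsf{H})$ takes only the eigenvalues $-2,0,2$, and in particular no odd ones, every irreducible summand of $\mathfrak{g}$ under this $\mathfrak{sl}_2$ is either the trivial module (weight $0$, lying in $\mathfrak{l}^{\mathbb{C}}$) or the three-dimensional adjoint module with weights $\{-2,0,2\}$. On each adjoint summand the raising operator $\mathrm{ad}(\eta)$ carries the lowest-weight line isomorphically, through the zero-weight line, onto the highest-weight line, so $\mathrm{ad}(\eta)^2$ restricts to an isomorphism from its $(-2)$-weight space onto its $(+2)$-weight space. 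As $\mathfrak{m}^-$ is exactly the sum of the $(-2)$-weight spaces and $\mathfrak{m}^+$ the sum of the $(+2)$-weight spaces of these adjoint summands, $\mathrm{ad}(\eta)^2\colon\mathfrak{m}^-\to\mathfrak{m}^+$ is an isomorphism, and therefore $H=-\mathrm{ad}(\eta)^2$ is a bijection. The main obstacle is conceptual rather than computational: spotting that $H$ is, up to sign, the square of one raising operator, and that the tube condition is exactly what forces the ambient $\mathfrak{sl}_2$-module to have no weight-one constituents. Once these two observations are in place, the representation theory concludes the proof uniformly, with no case-by-case analysis of the five tube-type families.
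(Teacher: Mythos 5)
Your proof is correct, and it takes a genuinely different route from the paper's. The paper establishes the lemma by a case-by-case check: for types I, II, III it computes $H(v)=2DAD$ in block-matrix form, for the hyperquadric it cites an earlier result, and for $E_7$ it enumerates by hand a ``compatible triple'' for every positive noncompact root and verifies the required uniqueness. You instead note that $H=-\mathrm{ad}(\eta)^2|_{\mathfrak{m}^-}$ for $\eta=\sum_i e_{\alpha_i}$, assemble the pairwise commuting root $\mathfrak{sl}_2$'s attached to the strongly orthogonal roots into a single diagonal triple $(\eta,\mathsf{H},\zeta)$ with $\mathsf{H}=\sum_i h_{\alpha_i}$, and invoke the Restricted Root Theorem to see that in the tube case $\mathrm{ad}(\mathsf{H})$ has spectrum contained in $\{-2,0,2\}$ with $\mathfrak{m}^{\pm}$ inside the $(\pm2)$-eigenspaces; finite-dimensional $\mathfrak{sl}_2$ theory then forces every irreducible constituent to be trivial or adjoint, $\mathrm{ad}(\eta)^2$ carries each lowest-weight line isomorphically onto the corresponding highest-weight line, and injectivity on $\mathfrak{m}^-$ together with $\dim\mathfrak{m}^-=\dim\mathfrak{m}^+$ finishes (note that exactness of the identification $\mathfrak{m}^{\pm}=\mathfrak{g}[\pm2]$ is not even needed, only the inclusions, which follow directly from $\rho(\Delta_M^{\pm})=\{\pm\tfrac12\alpha_i\pm\tfrac12\alpha_j\}$). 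This argument is uniform across all five tube-type families and isolates exactly where the tube hypothesis enters, namely the absence of restricted roots $\pm\tfrac12\alpha_i$ and hence of odd weights --- precisely the feature whose failure underlies the $G(2,3)$ counterexample in the introduction. What the paper's longer computation buys in exchange is the explicit existence and uniqueness of compatible triples, which is reused downstream (property $(\star)$ in the totally-real-orbit lemma and the computation of the splitting type $T(M)|_C=\mathcal{O}(2)^n$); your argument gives surjectivity of $H$ abstractly and would need a short supplement to recover that combinatorial data, but as a proof of the stated lemma it is complete and arguably cleaner.
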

\begin{proof}
We can check case-by-case. $M$ can be classified as (1) $G(n,n)$ with $n\geq 2$; (2) $G^{II}(n,n)$ with $n$ even and $n\geq 4$; (3) $G^{III}(n,n)$ with $n\geq 2$; (4) $Q^n$ with $n\geq 3$; or (5) $E_7$-type irreducible compact Hermitian symmetric space.
\par For the cases (1),(2) and (3), it is straightforward by matrix computation. Write $\sum_{i=1}^re_{\alpha_i}$ as 
$\begin{bmatrix}
0 & D \\
0 & 0
\end{bmatrix}$ and $v$ as $\begin{bmatrix}
0 & 0 \\
A & 0
\end{bmatrix}$
Then $H(v)$ can be written as 
$\begin{bmatrix}
0 & 2DAD \\
0 & 0
\end{bmatrix}$.
For $G(n,n)$ or $G^{III}(n,n)$ with $n\geq 2$, $r=n$, $D=I_r$ where $I_r$ is the $r\times r$ identity matrix. For $G^{II}(n,n)$ with $n$ even and $n\geq 4$, $r=\frac{n}{2}$,  we have 
$
D=J_r=diag(J_1,...J_1)
$ where $J_1=\begin{bmatrix}
0&1\\
-1&0
\end{bmatrix}$.
Hence for these cases $H$ is clearly a bijection. \par As hyperquadric case has already been proven in \cite[Proposition 2.3]{Zhang2015}, we remain to check the $E_7$ case, the strongly orthogonal roots are written as $\Pi=\{\alpha_1,\alpha_2, \alpha_3\}$. It suffices to show that $H$ is a surjection, i.e. for any root vector $e_{\gamma}\in \mathfrak{m}^+$, there exists $v\in \mathfrak{m}^-$, such that $[e_{\alpha_1}+e_{\alpha_2}+e_{\alpha_3},[v,e_{\alpha_1}+e_{\alpha_2}+e_{\alpha_3}]]=ce_{\gamma}$ for some nonzero constant $c$. If $\gamma=\alpha_i\in \Pi$, we can just choose $v=e_{-\alpha_i}$.
If $\gamma\notin \Pi$, for simplicity we introduce 
\begin{definition}
	For any postive noncompact root $\gamma$ we will call a triple of positive noncompact roots $(\alpha_i,\alpha_j,\beta)$ a \textbf{compatible triple for} 
	$\mathbf{\gamma}$ if $\beta=\alpha_i+\alpha_j-\gamma$ and  $\alpha_i-\gamma, \alpha_j-\gamma$ are roots, where $\alpha_i,\alpha_j\in \Pi$.
\end{definition} 

Then it suffices to show that for any positive noncompact root $\gamma\notin \Pi$, there exists a compatible triple $(\alpha_i, \alpha_j, \beta)$ for $\gamma$ and also there is no $\alpha_{i'},\alpha_{j'}\in \Pi$ such that $(\alpha_{i'},\alpha_{j'},\beta)$ is a compatible triple for another positive noncompace root $\gamma'\neq \gamma$. Then $v$ can be chosen as $e_{-(\alpha_i+\alpha_j-\gamma)}$. This property is straightforward to check when $M$ is classical.

\par We now check the root system of $E_7$. Let $x_i (1\leq i\leq 7)$ be the standard basis of $\mathbb{R}^7$. The positive roots are $x_i-x_j(1\leq i<j\leq 7)$, $x_i+x_j+x_k(1\leq i<j<k\leq 7)$ and $d-x_i(1\leq i\leq 7)$ where $d=\sum_{i=1}^7x_i$. The positive noncompact roots  are $x_1-x_i(2\leq i\leq  7)$, $x_1+x_i+x_j(2\leq i<j\leq 7)$ and $d-x_i(2\leq i\leq 7)$. The maximal strongly orthogonal roots can be chosen as $\alpha_1=x_1-x_2, \alpha_2=x_1+x_2+x_3, \alpha_3=d-x_3$. For $\gamma=x_1-x_3$, we have the triple $(\alpha_1,\alpha_3,d-x_2)$; For $\gamma=x_1-x_i$ with $4\leq i\leq 7$, we have the triple $(\alpha_1,\alpha_2,x_1+x_3+x_i)$; For $\gamma=x_1+x_2+x_j$ with $4\leq j\leq 7$, we have the triple $(\alpha_2,\alpha_3,d-x_j)$; For $\gamma=x_1+x_3+x_j$ with $4\leq j\leq 7$, we have the triple $(\alpha_1,\alpha_2,x_1-x_j)$; For $\gamma=x_1+x_i+x_j$ with $4\leq i<j\leq 7$, we have the triple $(\alpha_1,\alpha_3,x_1+x_k+x_\ell)$ with $\{k,\ell\} =\{4,5,6,7\}-\{i,j\}$. For $\gamma=d-x_2$, we have the triple  $(\alpha_1,\alpha_3,x_1-x_3)$; For $\gamma=d-x_i$ with $4\leq i\leq 7$ we have the triple  $(\alpha_2,\alpha_3,x_1+x_2+x_i)$. This list satisfies the desired property. Hence $H$ must be a bijection.  
\end{proof}
\begin{remark}
We can observe that in our situation only the uniqueness of compatible triple for a noncompact positive root $\gamma \notin \Pi$ can be obtained from the Restricted Root Theorem directly. 
\end{remark}

\par  From the lemma we can easily obtain that 
\begin{proposition}\label{unique}
If $C$ is a curve with generic tangent vectors in $M$, then for every point $x\in C$, $C$ is tangent to a unique standard model of diagonal curves in maximal polyspheres to the second order at $x$. 
\end{proposition}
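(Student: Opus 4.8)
The plan is to fix an arbitrary point, normalize the tangent data, and then read off both existence and uniqueness directly from the bijectivity of $H$ proved in Lemma~\ref{bijection}. First I would take $x\in C$, move it to the reference point $o$, and write $T_o(C)=\mathbb{C}\eta$. Since $C$ has generic tangent vectors, $\eta$ has rank $r$, so the $K^{\mathbb{C}}$-action on $T_o(M)$, which is transitive on rank-$r$ lines with representative $\sum_{i=1}^{r}e_{\alpha_i}$, lets me assume $\eta=\sum_{i=1}^{r}e_{\alpha_i}$. I take the base standard model $X_0$ to be the diagonal curve in the maximal polysphere attached to $\Pi$; in Harish-Chandra coordinates this is simply the compactified line $\overline{\mathbb{C}\eta}\subset M$, it passes through $o$, and $T_o(X_0)=\mathbb{C}\eta$, so $X_0$ is already tangent to $C$ to first order.

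Next I would identify the family of \emph{all} standard models tangent to $C$ to first order at $o$ with the orbit $\{\exp(\xi)\cdot X_0:\xi\in\mathfrak{m}^-\}$. Such models are exactly the translates of $X_0$ by $P_\eta\subset P$, the stabilizer of the line $\mathbb{C}\eta$; writing $P_\eta=K^{\mathbb{C}}_\eta\ltimes M^-$, the factor $K^{\mathbb{C}}_\eta$ fixes $X_0=\overline{\mathbb{C}\eta}$ because $K^{\mathbb{C}}$ acts linearly on $\mathfrak{m}^+$ and preserves the line $\mathbb{C}\eta$, while $M^-$ acts trivially on $T_o(M)$. Since $K^{\mathbb{C}}_\eta$ normalizes $M^-$, the whole family collapses to the $M^-$-orbit. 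By the computation recalled before Lemma~\ref{bijection}, the model $\exp(\xi)\cdot X_0$ has second fundamental form $\sigma(\eta,\eta)=H(\xi)\bmod\mathbb{C}\eta$, so being tangent to $C$ to second order is precisely the equation $H(\xi)\equiv\sigma_C(\eta,\eta)\pmod{\mathbb{C}\eta}$, where $\sigma_C(\eta,\eta)\in\mathfrak{m}^+/\mathbb{C}\eta$ denotes the second fundamental form of $C$.

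Finally I would extract existence and uniqueness. Composing the bijection $H$ with the projection $\mathfrak{m}^+\to\mathfrak{m}^+/\mathbb{C}\eta$ gives a surjection $\bar H:\mathfrak{m}^-\to\mathfrak{m}^+/\mathbb{C}\eta$ with one-dimensional kernel $H^{-1}(\mathbb{C}\eta)$; surjectivity solves the equation, giving existence (and explaining why the second-order tangency is automatic in the curve case). For uniqueness, a short computation using strong orthogonality gives $H(e_{-\alpha_i})=c\,e_{\alpha_i}$ with a common constant $c$ (all $\alpha_i$ have the same length by the Restricted Root Theorem), so $\ker\bar H=\mathbb{C}\xi_0$ with $\xi_0=\sum_{i}e_{-\alpha_i}$. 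As $\{\eta,\sum_i H_{\alpha_i},\xi_0\}$ is an $\mathfrak{sl}_2$-triple whose $SL_2$-orbit of $o$ is exactly $X_0$, the one-parameter group $\exp(\mathbb{C}\xi_0)$ fixes $X_0$; since $\mathfrak{m}^-$ is abelian, this identifies the $M^-$-stabilizer of $X_0$ with $\ker\bar H$, so $\xi\mapsto\exp(\xi)\cdot X_0$ and $\xi\mapsto H(\xi)\bmod\mathbb{C}\eta$ have the same fibers. Hence the second fundamental form is a bijection from the family of first-order-tangent standard models onto $\mathfrak{m}^+/\mathbb{C}\eta$, and exactly one of them matches $\sigma_C(\eta,\eta)$.

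I expect the main obstacle to be the uniqueness half rather than existence: one must ensure that the entire family of standard models tangent to $C$ to first order is captured by the $M^-$-action, so that no extra model coming from $K^{\mathbb{C}}_\eta$ could accidentally match to second order, and that the one-dimensional indeterminacy from $\ker\bar H$ yields the same model rather than a new one. Both points are settled by the two structural observations above, namely $X_0=\overline{\mathbb{C}\eta}$ and $\xi_0\in\mathfrak{sl}_2$; once Lemma~\ref{bijection} is in hand, the remaining argument is indeed short, which is why the proposition follows \emph{easily} from it.
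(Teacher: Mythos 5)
Your proposal is correct and follows essentially the same route as the paper, which derives the proposition directly from Lemma~\ref{bijection} via the formula $\sigma(\eta,\eta)=[\eta,[\xi,\eta]] \bmod \mathbb{C}\eta$ for the $M^-$-translates; the paper simply states that the proposition "easily" follows, leaving implicit the two points you work out (that the first-order-tangent family reduces to the $M^-$-orbit because $K^{\mathbb{C}}_{[\eta]}$ preserves $\overline{\mathbb{C}\eta}$, and that the one-dimensional kernel $\mathbb{C}\xi_0=H^{-1}(\mathbb{C}\eta)$ is absorbed by the $\mathfrak{sl}_2$-stabilizer of $X_0$). Your verification of these two points is sound and consistent with the paper's framework.
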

\begin{remark}
From above lemma we can also compute the splitting type of $T(M)$ over the diagonal curve $C$.  The corresponding coroot of $\alpha_i$ is denoted by $h_{\alpha_i}$. The tangent vector of $C$ at $o$ can be written as $\sum_{i=1}^re_{\alpha_i}$. By Grothendieck's decomposition Theorem we know that $T(M)|_{C}=\bigoplus_{\gamma}\mathcal{O}(a_\gamma)$ where $a_{\gamma}=\sum_{i=1}^r\gamma(h_{\alpha_i})$ whenever $\gamma$ is a positive noncompact root.
\par It is easy to see that when $\gamma=\alpha_i$ for some $1\leq i\leq r$, $a_{\gamma}=2$. For other roots, we can actually use the proof as in the above lemma, the proof tells us for the tube type irreducible compact Hermitian symmetric space, the compatible triple defined as above exists for $\gamma$ and it is unique. Hence there exist only two roots $\alpha_k,\alpha_\ell\in \Pi$ such that $\gamma(h_{\alpha_k})=\gamma(h_{\alpha_\ell})=1$ and for other $\alpha_i\in \Pi$, $\gamma(h_{\alpha_i})=0$. Hence the splitting type must be $T(M)|_{C}=\mathcal{O}(2)^n$. Then the local uniqueness of standard model tangent to $C$ to the second order at one point is guaranteed by deformation theory.
\end{remark}

\subsection{The affine-algebraicity of the Chow space}
\par From previous construction, we obtain a holomorphic map from $C$ to the Chow space of standard models (as tangency to the second order is a holomorphic condition). We now prove that the Chow space $\mathcal{M}$ is affine algebraic and hence the holomorphic map is actually constant. We know $G$ acts transitively on $\mathcal{M}$. Denoting the stablizer of the diagonal curve by $H$ , we need the following theorem from \cite{MR0437549} and also \cite[p.162]{MR1334091}).
\begin{theorem}
Let $H$ be a closed subgroup of the complex connected reductive affine algebraic group $G$, then $G/H$ is an affine variety if and only if it is stein, if and only if $H$ is a reductive group. 
\end{theorem}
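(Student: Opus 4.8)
The plan is to prove the three conditions equivalent in the cycle reductive $\Rightarrow$ affine $\Rightarrow$ Stein $\Rightarrow$ reductive, so that two of the implications are routine and the real work concentrates in a single step.

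For reductive $\Rightarrow$ affine I would invoke geometric invariant theory. Let $H$ act on the affine variety $G$ by right translation; this action is free, and each orbit is a coset $gH$, hence closed and of constant dimension $\dim H$. Since $H$ is reductive the invariant ring $\mathbb{C}[G]^H$ is finitely generated, so $\mathrm{Spec}\,\mathbb{C}[G]^H$ is an affine variety representing the categorical quotient; freeness together with closedness of the orbits upgrades it to a geometric quotient, which is exactly $G/H$. Hence $G/H$ is affine. The implication affine $\Rightarrow$ Stein is classical: an affine variety admits a closed embedding into some $\mathbb{C}^N$, and a closed analytic subset of $\mathbb{C}^N$ is Stein.

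The substantial step is Stein $\Rightarrow$ reductive, which is where I expect the main obstacle. I would argue by contradiction, assuming the unipotent radical $U = R_u(H)$ is nontrivial; after passing to the identity component of $H$ (which changes neither reductivity nor, since $G/H^0 \to G/H$ is a finite covering, the Stein property), I would exploit the fibration $G/U \to G/H$ with reductive fiber $H/U$ and attempt to produce an analytic obstruction to Stein-ness from the one-parameter unipotent subgroups $\mathbb{C}\cong\mathbb{G}_a \subset U$. The real difficulty is that the Stein condition is transcendental and does not algebraically detect the unipotent radical, so bridging the analytic hypothesis and the algebraic conclusion requires the structure theory of Stein homogeneous spaces of complex Lie groups due to Matsushima and Morimoto, which forces the isotropy group of a Stein homogeneous space to be reductive. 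Once this is in hand the cycle closes, since reductive $\Rightarrow$ affine was already shown, giving all three equivalences.
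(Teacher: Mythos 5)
Your cyclic scheme (reductive $\Rightarrow$ affine $\Rightarrow$ Stein $\Rightarrow$ reductive) is a legitimate way to organize the three equivalences, but note that the paper itself offers no proof at all: the statement is quoted as a known theorem with references to \cite{MR0437549} and \cite[p.162]{MR1334091}, so the comparison is really between your sketch and the literature. Your two easy legs are essentially right. For reductive $\Rightarrow$ affine, the orbits of the right $H$-action on $G$ are the cosets $gH$, which are closed because $H$ is; Nagata's finite generation of $\mathbb{C}[G]^H$ together with the standard fact that a reductive-group quotient with all orbits closed is geometric then produces an affine model of the orbit space. To know this model coincides with $G/H$ as a variety (not merely as a set) you should add a word about normality of $\mathrm{Spec}\,\mathbb{C}[G]^H$ and Zariski's main theorem, or invoke Luna's slice theorem for the free action, but this is routine. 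Affine $\Rightarrow$ Stein is classical, as you say. The substantive leg, Stein $\Rightarrow$ reductive, you correctly isolate as the hard analytic-to-algebraic step; your preliminary reductions (passing to $H^0$ via the finite covering $G/H^0\to G/H$, isolating the unipotent radical, the fibration $G/U\to G/H$ with fiber $H/U$) are all sound, but your actual resolution is to invoke Matsushima's theorem on Stein homogeneous spaces of reductive complex Lie groups --- which is precisely the equivalence being proved. So in substance your argument, like the paper's, ultimately rests on a citation for this direction; what you add over the paper is a correct, self-contained account of the other two implications and of how the three conditions chain together.
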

\par Write $v=\sum_{i=1}^re_{\alpha_i}$. We have the following lemma on the $K$-orbit $K.[v]\subset \mathbb{P}T_o(M)$. 
\begin{lemma}
When $M$ is an irreducible compact Hermitian symmetric space of tube type, the $K$-orbit $\mathcal{S}=K.[v]$ in the projectivized  tangent space $\mathbb{P}T_o(M)$ is an $(n-1)$-dimensional totally real submanifold, where $n=\dim(M)$.  
\end{lemma}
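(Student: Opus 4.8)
The plan is to show that $\mathcal{S}=K.[v]$ is a \emph{Lagrangian} submanifold of $\mathbb{P}T_o(M)=\mathbb{P}(\mathfrak{m}^+)$ with respect to the Fubini--Study K\"ahler form $\omega$ attached to the $K$-invariant Hermitian inner product $\langle\cdot,\cdot\rangle$ on $\mathfrak{m}^+$. This suffices, since a Lagrangian submanifold of a complex manifold of complex dimension $n-1$ is automatically totally real of real dimension $n-1$: if $X$ and $JX$ both lay in $T_{[v]}\mathcal{S}$ then $\omega(X,JX)=0$ would force $X=0$, because $\omega(\cdot,J\cdot)$ is the positive Riemannian metric. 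As $K$ acts unitarily on $\mathfrak{m}^+$ it preserves $\omega$, so the action is Hamiltonian and I may use the moment map.

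First, the dimension. Writing $\xi_w$ for the fundamental vector field of $w\in\mathfrak{l}^{\mathbb{C}}$ and $\phi\colon\mathfrak{l}^{\mathbb{C}}\to T_{[v]}\mathbb{P}(\mathfrak{m}^+)$, $\phi(w)=\xi_w([v])$, the map $\phi$ is complex linear (the $K^{\mathbb{C}}$-action is holomorphic, so $\phi(iw)=J\phi(w)$) and surjective, because $v=\sum_{i=1}^r e_{\alpha_i}$ has maximal rank $r$ and its $K^{\mathbb{C}}$-orbit is open in $\mathbb{P}(\mathfrak{m}^+)$ (for $M$ of tube type it is the complement of a $K^{\mathbb{C}}$-invariant hypersurface). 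Hence $\phi(\mathfrak{l})+J\phi(\mathfrak{l})=\phi(\mathfrak{l}^{\mathbb{C}})=T_{[v]}\mathbb{P}(\mathfrak{m}^+)$, of real dimension $2(n-1)$, so $\dim_{\mathbb{R}}T_{[v]}\mathcal{S}=\dim_{\mathbb{R}}\phi(\mathfrak{l})\ge n-1$, with equality if and only if $\phi(\mathfrak{l})\cap J\phi(\mathfrak{l})=0$, i.e. iff $\mathcal{S}$ is totally real. Thus it is enough to prove $\mathcal{S}$ is isotropic: an isotropic subspace of the symplectic space $(T_{[v]}\mathbb{P}(\mathfrak{m}^+),\omega)$ has real dimension at most $n-1$, and together with the inequality above this forces $\dim_{\mathbb{R}}\mathcal{S}=n-1$ and total reality at once.

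It remains to show $\omega|_{\mathcal{S}}=0$. For $X,Y\in\mathfrak{l}$ the moment map identity gives $\omega(\xi_X,\xi_Y)([v])=\langle\mu([v]),[X,Y]\rangle$, where $\mu([v])(X)=\frac{1}{2i}\frac{\langle X\cdot v,v\rangle}{\langle v,v\rangle}$ and $X\cdot v$ is the isotropy action. Since $\mathfrak{l}=\mathfrak{z}(\mathfrak{l})\oplus[\mathfrak{l},\mathfrak{l}]$, it suffices to verify $\langle X\cdot v,v\rangle=0$ for all $X\in[\mathfrak{l},\mathfrak{l}]$, and after complexifying it is enough to check $X=e_\beta$ for a compact root $\beta$ and $X\in\mathfrak{h}^{\mathbb{C}}\cap[\mathfrak{l},\mathfrak{l}]^{\mathbb{C}}$. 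Using that distinct root spaces are $\langle\cdot,\cdot\rangle$-orthogonal, for $X=e_\beta$ one has $e_\beta\cdot v=\sum_i N_{\beta,\alpha_i}e_{\beta+\alpha_i}$, and $\langle e_{\beta+\alpha_i},e_{\alpha_j}\rangle\neq 0$ would require $\beta=\alpha_j-\alpha_i$, which is impossible since $\alpha_i,\alpha_j\in\Pi$ are strongly orthogonal, so $\alpha_j-\alpha_i$ is not a root. For $X=H\in\mathfrak{h}^{\mathbb{C}}\cap[\mathfrak{l},\mathfrak{l}]^{\mathbb{C}}$ one gets $\langle H\cdot v,v\rangle=\sum_i\alpha_i(H)\|e_{\alpha_i}\|^2=\big(\sum_i\alpha_i\big)(H)\,\|e_{\alpha_1}\|^2$, all $e_{\alpha_i}$ having equal norm because the $\alpha_i$ have equal length. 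As $H$ ranges over the span of the compact coroots $h_\beta$, this vanishes once $\langle\sum_i\alpha_i,\beta\rangle=0$ for every compact root $\beta$.

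The last identity is where tube type is decisive, and I expect it to be the main point. By the Restricted Root Theorem every compact root $\beta$ restricts to $\tfrac12(\alpha_i-\alpha_j)$ with $i\neq j$ (or to $0$), so by strong orthogonality and equality of lengths $\langle\sum_k\alpha_k,\beta\rangle=\tfrac12(\|\alpha_i\|^2-\|\alpha_j\|^2)=0$. This is exactly the computation that fails for non-tube type, where restricted roots of the form $\tfrac12\alpha_i$ occur and give $\langle\sum_k\alpha_k,\beta\rangle=\tfrac12\|\alpha_i\|^2\neq 0$, breaking isotropy; this matches, and explains, the hypothesis that $M$ be of tube type. Assembling the two cases shows $\mu([v])\perp[\mathfrak{l},\mathfrak{l}]$, so $\mathcal{S}$ is isotropic, hence Lagrangian, which completes the proof.
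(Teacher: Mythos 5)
Your proof is correct, but it takes a genuinely different route from the paper. The paper argues by brute force at the level of root vectors: it decomposes $\mathfrak{l}$ into the real spans of $\sqrt{-1}h_{\phi}$, $\sqrt{-1}(e_{\phi}+e_{-\phi})$ and $e_{\phi}-e_{-\phi}$, computes $[\mathfrak{l},v]$ term by term, and uses the existence and uniqueness of ``compatible triples'' (extracted from the proof of Lemma \ref{bijection} together with the Restricted Root Theorem) to verify both that $[\mathfrak{l},v]^{\mathbb{C}}=\mathfrak{m}^+$ and that $J[\mathfrak{l},v]\cap[\mathfrak{l},v]=0$. You instead exhibit $K.[v]$ as a Lagrangian orbit for the Hamiltonian $K$-action on $(\mathbb{P}(\mathfrak{m}^+),\omega_{FS})$: the spanning statement is replaced by the openness of the $K^{\mathbb{C}}$-orbit of a rank-$r$ vector (complement of the characteristic hypersurface in the tube case, already quoted in the paper), and total reality is replaced by the vanishing of the moment map on $[\mathfrak{l},\mathfrak{l}]$, which you reduce via root-space orthogonality and strong orthogonality of $\Pi$ to the single identity $\langle\sum_k\alpha_k,\beta\rangle=0$ for compact $\beta$ — exactly where the tube-type dichotomy of the Restricted Root Theorem enters, and your remark that restricted compact roots of the form $\pm\tfrac12\alpha_i$ obstruct isotropy in the non-tube case is a nice bonus the paper does not make explicit. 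What each approach buys: the paper's computation produces an explicit real basis of $T_{[v]}\mathcal{S}$ and feeds directly into the later splitting-type remark, whereas yours is shorter, avoids the compatible-triple casework entirely, and is closer in spirit to the conceptual Lemma \ref{totally real} proved later for the general situation. The only points you should tighten are cosmetic: fix the $K$-invariant Hermitian product as $\langle u,w\rangle=-B(u,\tau w)$ so that distinct root spaces are orthogonal and all $\|e_{\alpha}\|$ are equal by the normalization $[e_{\alpha},e_{-\alpha}]=H_{\alpha}$ (equality of root lengths alone is not quite the reason), and note explicitly that $K$-invariance of $\omega$ lets you check isotropy at the single point $[v]$.
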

\begin{proof} We use $\mathfrak{l}$ to denote the Lie algebra of $K$. Then the tangent space of $\mathcal{S}$ is identified with 
\[T^{\mathbb{R}}_{[v]}(\mathcal{S})=([\mathfrak{l},v]+\mathbb{C}v)/\mathbb{C}v\]
On the other hand, we write \[\mathfrak{l}=\sum_{\phi\in \Delta}\sqrt{-1}h_{\phi}\mathbb{R}+\sum_{\phi\in \Delta_K^+}\sqrt{-1}(e_{\phi}+e_{-\phi})\mathbb{R}+\sum_{\phi\in \Delta_K^+}-(e_{\phi}-e_{-\phi})\mathbb{R}\]
where $\Delta$ denotes the set of all roots and $ \Delta_K^+$ denotes the set of all positive compact roots.
\par For the first part, we have 
\[[\sum_{\phi\in \Delta}\sqrt{-1}h_{\phi}\mathbb{R},v]=\sum_{i=1}^r\sqrt{-1}e_{\alpha_i}\mathbb{R}\]
For the second and third part, we use the following special property of the root system of Hermitian symmetric space of tube type which can be obtained from the proof of Lemma  \ref{bijection} together with the Restricted Root Theorem.
\par ($\star$) For any positive compact root $\phi$, if $\phi+\alpha_i$ is a positive noncompact root for some $\alpha_i\in \Pi$, then there exists a unique pair $\alpha_i,\alpha_j\in \Pi$ with $j\neq i$ such that $\alpha_i+\phi$ and $\alpha_j-\phi$ are noncompact positive roots. Moreover, $(\alpha_i,\alpha_j, \alpha_i+\phi)$ is the unique compatible triple for $\alpha_j-\phi$.
\par  We know for any root $\alpha,\beta$ whose sum is also a root, $[e_{\alpha},e_{\beta}]=N_{\alpha,\beta}e_{\alpha+\beta}$ for some nonzero real number $N_{\alpha,\beta}$ (cf. Theorem 5.5 in \cite[Chapter III]{MR1834454} ). Let $\beta_1=\alpha_i+\phi, \beta_2=\alpha_j-\phi$.
\par If $\beta_1\neq \beta_2$, then  $[\sqrt{-1}(e_{\phi}+e_{-\phi}),v]=\sqrt{-1}( N_{\phi,\alpha_i}e_{\beta_1}+N_{-\phi,\alpha_j}e_{\beta_2})$ and $[e_{\phi}-e_{-\phi},v]= N_{\phi,\alpha_i}e_{\beta_1}-N_{-\phi,\alpha_j}e_{\beta_2}$, the set of such compact roots $\phi$ is denoted by $\Delta_{K,1}$.
\par If $\beta_1=\beta_2$, then  $[\sqrt{-1}(e_{\phi}+e_{-\phi}),v]=\sqrt{-1}( N_{\phi,\alpha_i}+N_{-\phi,\alpha_j})e_{\beta_1}$ and $[e_{\phi}-e_{-\phi},v]= (N_{\phi,\alpha_i}-N_{-\phi,\alpha_j})e_{\beta_1}$. Note that $\{\alpha_i,\beta_1,\alpha_j\}$ is a maximal $\phi$-chain, also from Theorem 5.5 in \cite[Chapter III]{MR1834454} we have $N_{\phi,\alpha_i}^2=\phi(H_{\phi})=N_{-\phi,\alpha_j}^2$, hence either $[\sqrt{-1}(e_{\phi}+e_{-\phi}),v]=0$ or $[e_{\phi}-e_{-\phi},v]=0$, the set of compact roots $\phi$ with $[e_{\phi}-e_{-\phi},v]=0$ is denoted by $\Delta_{K,2}$ and the set of compact roots $\phi$ with $[\sqrt{-1}(e_{\phi}+e_{-\phi}),v]=0$ is denoted by $\Delta_{K,3}$.
\par Since all $N_{\phi,\alpha_i}$ are real, from above we have \begin{align*}
[\mathfrak{l},v]&=\sum_{i=1}^r\sqrt{-1}e_{\alpha_i}\mathbb{R}+\sum_{\phi\in \Delta_{K,1}}\sqrt{-1}( N_{\phi,\alpha_{i_\phi}}e_{\alpha_{i_\phi}+\phi}+N_{-\phi,\alpha_{j_\phi}}e_{\alpha_{j_\phi}-\phi})\mathbb{R}+
\\&\sum_{\phi\in \Delta_{K,1}}( N_{\phi,\alpha_{i_\phi}}e_{\alpha_{i_\phi}+\phi}-N_{-\phi,\alpha_{j_\phi}}e_{\alpha_{j_\phi}-\phi})\mathbb{R}+\sum_{\phi\in \Delta_{K,2}}\sqrt{-1}e_{\alpha_{i_\phi}+\phi}\mathbb{R}+\sum_{\phi\in\Delta_{K,3}}e_{\alpha_{i_\phi}+\phi}\mathbb{R}
\end{align*}
where $\alpha_{i_\phi},\alpha_{j_\phi}$ are uniquely determined corresponding to the choice of $\phi$.
\par Since the complex structure $J$ on $\mathbb{P}T_o(M)$ acts on $\mathfrak{m}^+$ by $\sqrt{-1}$ and also the compatible triple uniquely exists for each positive noncompact root, we can easily obtain that $[\mathfrak{l},v]^{\mathbb{C}}=\mathfrak{m}^+$ and $[\mathfrak{l},v]$ is totally real, i,e, $J[\mathfrak{l},v]\cap [\mathfrak{l},v]=\{0\}$. Therefore $\mathcal{S}$ is an $(n-1)$-dimensional totally real submanifold in $\mathbb{P}T_o(M)$. 
\end{proof}

\begin{remark}
This lemma is actually equivalent to the fact that the Bergman-\v{S}ilov boundary of a tube type irreducible bounded symmetric domain is totally real and has half real dimension of the whole space. Although the latter is a well-known result in \cite{MR0174787}, we still give a self-contained proof here.
\end{remark}

\par  Now we want to prove that $H$ is a reductive Lie group. We need to show that $H$ has a compact real form and $H$ has finitely many connected components. The latter will be uniformly explained in Proposition \ref{finite connected}. So we just show that $H$ has a compact real form in this section.

\par We consider the $G_c$ action on a fixed diagonal curve $C_d$. The stablizer is denoted by $H_0$, we claim that $Lie(H)=Lie(H_0)^{\mathbb{C}}$, since $H_0$ is a compact real Lie group, we know $H$ is reductive. 
\par $Lie(H)=Lie(H_0)^{\mathbb{C}}$ can be done by dimension counting. We consider two double fibrations. The first one is given by 
\[M=G_c/K\stackrel{\rho_1}\longleftarrow \mathcal{U}_1 \stackrel{\mu_1}\longrightarrow G_c/H_0 \]
where $\rho_1^{-1}(o)\cong K.[v]$ and $\mu_1^{-1}(\kappa)\cong \mathbb{P}^1$ for some standard model of diagonal curve $\kappa$. $G_c/H_0$ is a subspace in the Chow space of all standard models of the diagonal curve. $\mu_1$ can be well-defined for the following reason: Choose some canonical K\"{a}hler-Einstein metric on $M$ such that the diagonal curve $C_d$ is totally geodesic (so $G_c/H_0$ is actually the Chow space of all standard models of the diagonal curves which are totally geodesic with respect to this metric). $K$ is in the group of isometry $G_c$, so $K$-action on $C_d$ preserving the tangent vector $v$ will fix the standard model. From this double fibration we easily compute that 
\[
\dim_{\mathbb{R}}(H_0)=-\dim_{\mathbb{R}}(M)-\dim_{\mathbb{R}}(K.[v])+\dim_{\mathbb{R}}(G_c)+2=\dim_{\mathbb{R}}(G_c)-(3n-3)
\]
On the other hand, from Proposition \ref{unique} we have another double fibration
\[ \mathcal{D}(M) \stackrel{\rho_2}\longleftarrow \mathcal{U}_2 \stackrel{\mu_2}\longrightarrow G/H \]
where $\mathcal{D}(M)$ is a fibration over $M=G/P$ with fibre isomorphic to $P.[v]$ and $\rho_2^{-1}(o,[w])=[w,[\mathfrak{m}^-,w]]/\mathbb{C}w $ and $\mu_2^{-1}(\kappa)\cong \mathbb{P}^1$ for some standard model of diagonal curve $\kappa$. Thus we have 
\[
\dim_{\mathbb{C}}(H)=-\dim_{\mathbb{C}}(M)-\dim_{\mathbb{C}}(P.[v])-(n-1)+\dim_{\mathbb{C}}(G)+1=\dim_{\mathbb{C}}(G)-(3n-3)
\]
Combine the dimensions above, we have 
\[\dim_{\mathbb{C}}(H)=\dim_{\mathbb{R}}(H_0)\]
Since $H_0^{\mathbb{C}}\subset H$ we know $Lie(H_0)^{\mathbb{C}}=Lie(H)$ and then $H$ is a reductive group.
So we can obtain Theorem \ref{diagonal curve}.

\begin{remark}
The (complex) dimension of $G/H$ is $3n-3$, this is compatible with the computation from deformation theory, as $T(M)|_C=\mathcal{O}(2)^n$ and $\dim H^0(C,N_{C|M})=3(n-1)$.
\end{remark}

\section{The affine-algebraicity of the Chow space and a weaker version of gap rigidity}\label{weakergap}
In this part, we give the proof of Theorem \ref{generalaffineness} and Corollary \ref{general}. Suppose $X$ is an $(H_2)$-subspace in $M=G/P$ satisfying that $P.[T_o(X)]$ is contained in a complement of a $P$-invariant hypersurface in $Gr(m,T_o(M))$ where $\dim(M)=n, \dim(X)=m$. The idea is basically similar as diagonal curve case, we need to show
\begin{enumerate}
\item The standard models with the same tangent subspace at $o$ form an $(n-m)$-dimensional family and the space of second fundamental forms of all standard models is an $(n-m)$-dimensional subspace in  $S^2T_o(X)\otimes N_{o,X|M}$ (which will be denoted by $Q$). 
\item	
There is a totally real $K$-orbit on $P.[T_o(X)]\subset Gr(m, T_o(M))$ such that 
$\dim_{\mathbb{C}}(P.[V])=\dim_{\mathbb{C}}(K^{\mathbb{C}}.[V])=\dim_{\mathbb{R}}(K.[V])$ where $[V]$ denotes a reference point on the totally real $K$-orbit. Also, the isotropy group $H$ of the $G$-action on the space of standard models has finitely many connected components. 
\end{enumerate}
Assuming these two facts, we can give a proof for Theorem \ref{generalaffineness} and Corollary \ref{general}. 
\begin{proof}[Proof of Theorem \ref{generalaffineness}]
We know
$V$ is corresponding to some choice of standard models, we fix one as $g.X$ and choose some K\"{a}hler-Einstein metric on $M$ such that $g.X$ is totally geodesic with respect to this metric. Inheriting the same argument and notations in diagonal curve case, we also have two double fibrations. The first one is given by 
\[M=G_c/K\stackrel{\rho_1}\longleftarrow \mathcal{U}_1 \stackrel{\mu_1}\longrightarrow G_c/H_0 \]
where $\rho_1^{-1}(o)\cong K.[V]$ and $\mu_1^{-1}(\kappa)\cong X$ for some standard model $\kappa$, $H_0$ is the isotropy group of $G_c$-action on the Chow space of standard models with reference point $[g.X]$. The second double fibration is
\[ \mathcal{D}(M) \stackrel{\rho_2}\longleftarrow \mathcal{U}_2 \stackrel{\mu_2}\longrightarrow G/H \]
where $\mathcal{D}(M)$ is a fibration over $M=G/P$ with fibres isomorphic to $P.[V]$ and $\rho_2^{-1}(o,[V])\cong Q$ and $\mu_2^{-1}(\kappa)\cong X$ for some standard model $\kappa$, $G/H$ is the Chow space of standard models.

From the first fact 
we still have the dimension counting 
\[	\dim_{\mathbb{R}}(H_0)=-\dim_{\mathbb{R}}(M)-\dim_{\mathbb{R}}(K.[V])+\dim_{\mathbb{R}}(G_c)+2m
\]
and
\[
\dim_{\mathbb{C}}(H)=-\dim_{\mathbb{C}}(M)-\dim_{\mathbb{C}}(P.[V])-(n-m)+\dim_{\mathbb{C}}(G)+m
\]
Therefore $\dim_{\mathbb{R}}(H_0)=\dim_{\mathbb{C}}(H)$ holds, together with the fact that $H$ has finitely many connected components, we know $H$ is reductive and the conclusion easily follows. 
\end{proof}

Then we can prove Corollary \ref{general}. 
\begin{proof}[Proof of Corollary \ref{general}]
From the assumption and the first fact we know that for each $x\in S$ there exists only one standard model tangent to $S$ at $x$ to the second order and hence the holomorphic map from $S$ to the Chow space of standard models can be constructed. Then the conclusion follows from Theorem \ref{generalaffineness}.
\end{proof}
\begin{remark}
For the curve case, i.e. $M$ is irreducible and of tube type, $X$ is a diagonal curve in $M$, two conditions are satisfied including $P.[T_o(X)]=\mathcal{O}_o$ and  $Q=S^2T_o(X)\otimes N_{o,X|M}$ by dimension counting, the second condition gives the existence of standard models with second order tangency automatically. Hence the 'weaker' gap rigidity is actually the original gap rigidity in this case. In general situation we want to remove the extra condition on second order tangency, i.e. to construct a holomorphic map from $S$ to the Chow space of standard models with only the first order tangency. This may need further study.
\end{remark}

We firstly give an affirmative answer to the second statement, we have 
\begin{lemma}\label{totally real}
If the $P$-orbit $P.[T_o(X)]\subset Gr(m, T_o(M))$ is contained in a complement of a $P$-invariant hypersurface, then there is a totally real $K$-orbit in $P.[T_o(X)]\subset Gr(m, T_o(M))$, denoting the reference point by $[V]$ we have  \[\dim_{\mathbb{C}}(K^{\mathbb{C}}.[V])=\dim_{\mathbb{R}}(K.[V])\]
\end{lemma}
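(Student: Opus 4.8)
The plan is to reduce the assertion to a statement about the isotropy group and then to exploit the affineness of the complement of the hypersurface. First I would record the elementary but decisive linear-algebra fact that makes ``totally real of half dimension'' equivalent to ``reductive isotropy''. Since $M^-$ acts trivially on $T_o(M)$ and $P=K^{\mathbb{C}}M^-$, we have $P.[V]=K^{\mathbb{C}}.[V]$ for every $[V]$. The tangent space of $Gr(m,T_o(M))$ at $[V]$ is $\mathrm{Hom}(V,T_o(M)/V)$, and the differential of the isotropy action sends $\zeta\in\mathfrak{l}^{\mathbb{C}}$ to the homomorphism $v\mapsto \zeta\cdot v \bmod V$; this map is $\mathbb{C}$-linear in $\zeta$, so $T_{[V]}(K^{\mathbb{C}}.[V])=\mathfrak{l}\cdot[V]+J(\mathfrak{l}\cdot[V])$ is automatically the complexification of $T_{[V]}(K.[V])=\mathfrak{l}\cdot[V]$. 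Hence the equality $\dim_{\mathbb{C}}(K^{\mathbb{C}}.[V])=\dim_{\mathbb{R}}(K.[V])$ holds, and $K.[V]$ is totally real, if and only if $\mathfrak{l}\cdot[V]\cap J(\mathfrak{l}\cdot[V])=0$, which is in turn equivalent to the isotropy subalgebra of $\mathfrak{l}^{\mathbb{C}}$ at $[V]$ being the complexification of the isotropy subalgebra of $\mathfrak{l}$, i.e. to $\mathrm{Stab}_{K^{\mathbb{C}}}([V])$ being reductive. So it suffices to produce one point $[V]$ in the orbit $P.[T_o(X)]$ whose stabilizer in $K^{\mathbb{C}}$ is reductive.

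Next I would use the hypersurface hypothesis to make the ambient space affine. The Grassmannian $Gr(m,T_o(M))$ has Picard group $\mathbb{Z}$ generated by the ample Pl\"ucker bundle $L$, so the $P$-invariant hypersurface $\mathcal{Z}_o$ is a member of $|L^{\otimes d}|$ for some $d\geq 1$ and is therefore an ample divisor; consequently $\mathcal{O}_o=Gr(m,T_o(M))\setminus\mathcal{Z}_o$ is a smooth affine variety on which the reductive group $K^{\mathbb{C}}$ acts, with $K^{\mathbb{C}}.[T_o(X)]\subset\mathcal{O}_o$. By the Matsushima-type theorem already quoted, a $K^{\mathbb{C}}$-orbit in an affine variety has reductive stabilizer precisely when the orbit is closed, and in that case the Kempf--Ness/Mostow theory provides a distinguished $K$-orbit of minimal vectors which, since for a reductive complex group the real dimension of a maximal compact subgroup equals the complex dimension of the group, is exactly the totally real orbit with $\dim_{\mathbb{R}}(K.[V])=\dim_{\mathbb{C}}(K^{\mathbb{C}}.[V])$ sought in the first paragraph. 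Thus the whole lemma reduces to showing that the single orbit $K^{\mathbb{C}}.[T_o(X)]$ is closed in $\mathcal{O}_o$.

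The closedness is the main obstacle, and here I would combine two structural inputs. Because $M$ is irreducible the isotropy representation of $K^{\mathbb{C}}$ on $\mathfrak{m}^{+}=T_o(M)$ is irreducible, so the connected centre $Z^{\circ}\cong\mathbb{C}^{*}$ acts by scalars on $\mathfrak{m}^{+}$ and hence trivially on $Gr(m,T_o(M))$; the action factors through the semisimple part $K^{\mathbb{C}}_{ss}$, and by Hilbert--Mumford every boundary degeneration of the orbit is induced by a one-parameter subgroup $\lambda$ of $K^{\mathbb{C}}_{ss}$. Realising $Gr(m,T_o(M))\hookrightarrow\mathbb{P}(\wedge^{m}\mathfrak{m}^{+})$ by Pl\"ucker, the invariant hypersurface is the zero locus of a relative invariant $f$ whose character is trivial on $K^{\mathbb{C}}_{ss}$; therefore $f$ is constant along $\lambda(t)\cdot(\wedge^{m}V)$, and a weight analysis shows that any limit $\lim_{t\to 0}\lambda(t)\cdot[T_o(X)]$ which leaves the orbit and is driven by a nonzero lowest weight must satisfy $f=0$, i.e. must lie in $\mathcal{Z}_o$ and hence outside $\mathcal{O}_o$. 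The genuinely delicate remaining case is a ``balanced'' degeneration with lowest weight zero, whose limit could a priori stay inside $\mathcal{O}_o$; to exclude it I would feed in the $(H_2)$-condition $\rho(H_0)=H'_0$, reading the preservation of the grading element as the vanishing of the $K$-moment map at $[T_o(X)]$, which forces $[T_o(X)]$ to be polystable and its orbit closed. This balanced case is where the special nature of $(H_2)$-subspaces is essential, and it is exactly the conceptual counterpart of the explicit root-theoretic verification carried out for diagonal curves in the earlier lemma, where the generic orbit in $\mathbb{P}T_o(M)$ was shown directly to carry the totally real Bergman--\v{S}ilov boundary as its minimal $K$-orbit.
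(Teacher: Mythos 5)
Your overall architecture --- reduce the lemma to reductivity of the isotropy group $J$ of the orbit $K^{\mathbb{C}}.[V]=P.[V]$, using that the complement of the $P$-invariant hypersurface is affine --- is the same as the paper's. The paper then produces the totally real half-dimensional $K$-orbit by showing $L=K\cap J$ is a compact real form of $J$: it quotes the existence of a totally real $K$-orbit on $K^{\mathbb{C}}/J$ for reductive $J$, and runs the dimension count $\dim_{\mathbb{R}}K/L\leq\dim_{\mathbb{C}}K^{\mathbb{C}}/J$ together with $Lie(L)\oplus\sqrt{-1}Lie(L)\subset Lie(J)$ to force equality. Your first paragraph is slightly loose on this point: reductivity of $J$ does not give $\mathfrak{l}\cdot[V]\cap J(\mathfrak{l}\cdot[V])=0$ at an arbitrary point of the orbit; one must pass to a suitable point where $K\cap J$ is a maximal compact subgroup of $J$, which is exactly what the cited fact and the dimension count accomplish.

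The genuine gap is in your second and third paragraphs. The assertion that a $K^{\mathbb{C}}$-orbit in an affine variety ``has reductive stabilizer precisely when the orbit is closed'' is false in the direction you rely on for the reduction: a closed orbit has reductive stabilizer, but a non-closed orbit can have one too (the orbit of $(1,0)$ under $t\cdot(x,y)=(tx,t^{-1}y)$ on $\mathbb{C}^2$ is non-closed with trivial stabilizer). You have therefore replaced the lemma by the strictly stronger claim that $K^{\mathbb{C}}.[T_o(X)]$ is closed in $\mathcal{O}_o$, and the whole burden then falls on your third paragraph, which is a plan rather than a proof: the ``weight analysis'' showing that degenerate limits land in $\mathcal{Z}_o$ is not carried out, and the decisive step --- that the $(H_2)$-identity $\rho(H_0)=H_0'$ forces the $K$-moment map to vanish at $[T_o(X)]$, hence polystability and closedness --- is asserted with no argument, even though it is precisely the hard case you yourself isolate. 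Note also that the lemma is stated and proved in the paper without any appeal to the $(H_2)$-condition. The paper's own proof never mentions closedness: it asserts that the orbit, being contained in the affine complement of the hypersurface, is itself affine algebraic, and then gets reductivity of $J$ directly from the quoted Matsushima-type theorem. That step of the paper is admittedly terse --- containment in an affine variety does not in general make an orbit affine --- so your instinct that something must be supplied here is not unreasonable; but as written your argument does not supply it.
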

\begin{proof}
Since $K^{\mathbb{C}}.[V]$ is in the complement of a hypersurface in $Gr(p,T_o(M))$, it is affine algebraic and hence the isotropy group $J$ of this orbit is reductive. Since $K$ is a compact real subgroup of $K^{\mathbb{C}}$, there is a totally real $K$-orbit on $K^{\mathbb{C}}/J$ (cf.\cite[p.161]{MR1334091}). We let $L=K\cap J$, then
\[\dim_{\mathbb{R}}K/L\leq \dim_{\mathbb{C}}K^{\mathbb{C}}/J\]
so $\dim_{\mathbb{R}}(L)\geq \dim_{\mathbb{C}}(J)$. On the other hand we know the Lie algebra $Lie(L)\oplus \sqrt{-1}Lie(L) \subset J$, then $L$ is a maximal compact real form of $J$ and \[\dim_{\mathbb{R}}K/L= \dim_{\mathbb{C}}K^{\mathbb{C}}/J\]
\end{proof}
\begin{remark}
This lemma gives a uniform conceptual proof on the existence of totally real $K$-orbits, and the computation using root vectors in curve case gives an explicit example.
\end{remark}

Also we can prove 
\begin{proposition}\label{finite connected}
The isotropy group $H$ has finitely many connected components.
\end{proposition}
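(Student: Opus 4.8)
The plan is to realize $H$ as a Zariski-closed subgroup of the complex algebraic group $G$ and then invoke the general fact that a linear algebraic group over $\mathbb{C}$ has only finitely many connected components. First I would observe that every standard model $g\cdot i(X)$ is a closed projective subvariety of $M=G/P$: since $X$ is an $(H_2)$-subspace it is a totally geodesic homogeneous submanifold, hence algebraic, and left translation by $g\in G$ is an automorphism of the projective variety $M$, so it carries $i(X)$ to a subvariety of the same homology class. Thus the standard models form a single family of projective subvarieties of $M$ all sharing one Hilbert polynomial.

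Next I would parametrize these subvarieties by points of the relevant component $\mathcal{H}$ of the Chow variety (or Hilbert scheme) of $M$. The $G$-action on $M$ induces a morphic action of $G$ on $\mathcal{H}$, and the standard models constitute precisely the $G$-orbit of the point $[i(X)]\in\mathcal{H}$. The moduli space of standard models is then $G\cdot[i(X)]\cong G/H$, where $H=\{g\in G:\ g\cdot[i(X)]=[i(X)]\}$ is exactly the isotropy group occurring in the second double fibration $\mathcal{D}(M)\stackrel{\rho_2}\longleftarrow \mathcal{U}_2 \stackrel{\mu_2}\longrightarrow G/H$. Because this action is a morphism of varieties and the point $[i(X)]$ is closed, its stabilizer $H$ is a Zariski-closed subgroup of $G$, i.e. a linear algebraic group.

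Finally, a linear algebraic group over $\mathbb{C}$ has finitely many irreducible components, and for such a group the Zariski identity component agrees with the identity component in the Euclidean topology; hence $H/H^{0}$ is finite, which is the assertion. Combined with the earlier equality $Lie(H)=Lie(H_0)^{\mathbb{C}}$ for the compact group $H_0$ (so that $H^{0}$ is the complexification of a compact group and thus reductive), this gives that $H$ is a reductive algebraic group, as needed for the affineness criterion.

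The only genuinely delicate point, and the main obstacle, is setting up the algebraic parameter space correctly so that the ``moduli space of standard models'' used in the double fibration literally coincides with the orbit $G/H$ for the stabilizer $H$ above, and checking that the induced $G$-action on that space is algebraic; once the standard models are recognized as one $G$-orbit of a point in a Chow variety or Hilbert scheme, finiteness of the component group is formal. It is worth noting explicitly that the discrete symmetries responsible for nontrivial elements of $H/H^{0}$ --- for instance the finite group of signed permutations of $\Pi$ supplied by the Restricted Root Theorem, or finite automorphisms of $X$ preserving the embedding --- are exactly what the algebraicity argument handles uniformly, so that no separate case analysis for the classical and $E_7$ types is required here.
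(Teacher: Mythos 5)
Your proof is correct, but it takes a genuinely different route from the paper's. The paper proves the proposition by running homotopy exact sequences through the double fibration $\mathcal{D}(M)\leftarrow\mathcal{U}_2\rightarrow G/H$: it first shows $\pi_1(K^{\mathbb{C}}.[V])$ is finite (using that this orbit is Stein with reductive isotropy, which rests on the hypothesis that $P.[V]$ avoids a $P$-invariant hypersurface), deduces that $\pi_1(\mathcal{D}(M))$ and then $\pi_1(\mathcal{U}_2)$ are finite (the $\rho_2$-fibres being affine spaces), transfers this to $\pi_1(G/H)$ using simple connectivity of the $(H_2)$-subspace $X$, and finally reads off $\pi_0(H)$ from $\pi_1(G)\to\pi_1(G/H)\to\pi_0(H)\to 0$. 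You instead observe that $i(X)$ is a projective subvariety of $M$ (Chow), that the $G$-action is algebraic, and that the stabilizer of a closed subvariety --- equivalently of the point $[i(X)]$ in the Chow variety or Hilbert scheme --- is Zariski closed, hence a complex linear algebraic group with finitely many components. Your argument is shorter and more robust: it needs none of the hypotheses the paper's homotopy computation leans on (the hypersurface-complement condition, simple connectivity of $X$, the affine-space structure of the $\rho_2$-fibres), and it would apply verbatim to any $G$-orbit of a closed subvariety. What the paper's route buys is that it stays entirely inside the double-fibration framework already set up for the dimension count, and it yields the slightly stronger byproduct that $\pi_1(G/H)$ itself is finite. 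One small point of hygiene in your write-up: to see that $\{g: g\cdot i(X)=i(X)\}$ is Zariski closed without invoking the Chow variety, note that $\{g: g\cdot i(X)\subseteq i(X)\}$ is the complement of the image under the open projection $G\times i(X)\to G$ of the open set $\{(g,x): g\cdot x\notin i(X)\}$, and intersect with the analogous set for $g^{-1}$; this removes your "main obstacle" of rigorously algebraizing the moduli space, since finiteness of $\pi_0(H)$ only needs $H$ itself to be algebraic, not the identification of $G/H$ with a quasi-projective orbit.
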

\begin{proof}
We consider the double fibration
\[ \mathcal{D}(M) \stackrel{\rho_2}\longleftarrow \mathcal{U}_2 \stackrel{\mu_2}\longrightarrow G/H \]
We know $\mathcal{D}(M)$ is a fibration over $M=G/P$ with fibre isomorphic to $P.[V]=K^{\mathbb{C}}.[V]$ and the fibre of $\rho_2$ is some affine space. On the other hand the fibre of $\mu_2$ is some standard model of $(H_2)$-subspace. We will use some exact sequences on the fundamental groups in the fibrations. The first exact sequence is 

\[   \pi_1(K^{\mathbb{C}}.[V])\longrightarrow \pi_1(\mathcal{D}(M)) \longrightarrow \pi_1(M)  \]
Denoting the connected semisimple part of $K^{\mathbb{C}}$ by $\tilde{K}^{\mathbb{C}} $, we know $K^{\mathbb{C}}.[V]=\tilde{K}^{\mathbb{C}}.[V]$ is stein and the isotropy group $J$ under $\tilde{K}^{\mathbb{C}}$-action is reductive and hence has finitely many connected components. Then from  $\pi_{1}(\tilde{K}^{\mathbb{C}})\longrightarrow\pi_1(K^{\mathbb{C}}.[V])\longrightarrow \pi_0(J)\longrightarrow 0$ we know $\pi_1(K^{\mathbb{C}}.[V])$ is finite (Although $\pi_0$ is not a group, the sequence is still exact in the sense that kernel equals image, and the 'zero' in $\pi_{0}(J)$ can be chosen as any element in $\pi_0(J)$). On the other hand $\pi_1(M)=0$, so $\pi_1(\mathcal{D}(M))$ is finite. 
\par The second exact sequence is \[    0 \longrightarrow \pi_1(\mathcal{U}_2) \longrightarrow \pi_1(\mathcal{D}(M)) \]
since the fibre of $\rho_2$ is an affine space. Thus $\pi_1(\mathcal{U}_2)$ is finite.
\par The last exact sequence is 
\[    0 \longrightarrow \pi_1(\mathcal{U}_2) \longrightarrow \pi_1(G/H) \longrightarrow 0\]
since any $(H_2)$-subspace is path-connected and  simply-connected. Hence $\pi_1(G/H)$ is finite and from 
$  \pi_1(G) \longrightarrow \pi_1(G/H) \longrightarrow \pi_0(H) \longrightarrow 0$
we know $H$ has finitely many connected components.
\end{proof}

\par  The remaining task is to show the first statement, we still have the following lemma as diagonal curve case. Hereafter all the complex conjugations are with respect to the compact real form.
\begin{lemma}\label{2nd fundamental form under M-}
Suppose that $X\subset M$ is totally geodesic with respect to some choice of K\"{a}hler-Einstein metric, then under the $M^-$ action given by $\exp(\overline{u})$ for some $u\in \mathfrak{m}^+$, the second fundamental form of $\exp(\overline{u}).X$ in $M$ will be \[\sigma_{o}(v_1,v_2)=[v_1,[\overline{u},v_2]] \mod T_{o}(X)\] for any $v_1,v_2\in T_o(X)$.
\end{lemma}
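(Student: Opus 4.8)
The plan is to compute everything in Harish-Chandra coordinates centered at $o$, in which $M$ contains the abelian cell $\exp(\mathfrak m^+)\cdot o$ as a dense Zariski-open subset and $o$ corresponds to $0\in\mathfrak m^+\cong T_o(M)$. The first observation is that since $X$ is an equivariant totally geodesic subspace through $o$, in these coordinates it is simply the linear subspace $V:=T_o(X)\subset\mathfrak m^+$; indeed $X=\exp(V)\cdot o$, and $V$ is automatically abelian because $\mathfrak m^+$ is. Thus its second fundamental form at $o$ vanishes, and after applying $g=\exp(\overline u)\in M^-\subset P$ the only contribution to $\sigma_o$ will come from the second-order distortion of the coordinate cell induced by $g$.

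Next I would express $g\cdot X$ in Harish-Chandra coordinates. For $w\in V$ write $g\cdot(\exp(w)\cdot o)=\exp(w'(w))\cdot o$, where $w'(w)\in\mathfrak m^+$ is read off from the group factorization $\exp(\overline u)\exp(w)=\exp(w')\,p$ with $p\in P=\exp(\mathfrak l^{\mathbb C}+\mathfrak m^-)$ (such a factorization exists for $w$ near $0$, since $g$ fixes $o$ and the big cell is open). Because the unipotent radical $M^-$ acts trivially on $T_o(M)$, the differential $dg_o$ is the identity, so the linear part of $w\mapsto w'(w)$ is the identity and $w'(w)=w+\tfrac12 B(w,w)+O(w^3)$ for a symmetric bilinear map $B\colon V\times V\to\mathfrak m^+$; hence $g\cdot X=\{\,w+\tfrac12 B(w,w)+\cdots:w\in V\,\}$ is a graph over $V$.

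To evaluate $B$ I would expand this factorization to second order in $w$, using the grading relations $[\mathfrak m^+,\mathfrak m^+]=[\mathfrak m^-,\mathfrak m^-]=0$, $[\mathfrak m^+,\mathfrak m^-]\subset\mathfrak l^{\mathbb C}$ and $[\mathfrak l^{\mathbb C},\mathfrak m^\pm]\subset\mathfrak m^\pm$ (a Baker--Campbell--Hausdorff computation in which the abelianness of $\mathfrak m^+$ truncates the series). This yields $B(v_1,v_2)=[v_1,[\overline u,v_2]]$; symmetry in $v_1,v_2$ follows from the Jacobi identity together with $[v_1,v_2]=0$, and on the diagonal it recovers the curve-case formula $\exp(\tfrac{t^2}{2}[\eta,[\overline u,\eta]])\cdot o$ of \cite[Lemma 4.3]{MR1198602}, so the present statement is exactly its polarization. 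Finally, since the chosen metric is K\"ahler--Einstein (canonical), the geodesic symmetry $w\mapsto -w$ at $o$ is an isometry, so the Christoffel symbols vanish at $o$ in Harish-Chandra coordinates; consequently, for any submanifold through $o$ written as a graph over its tangent space the second fundamental form is precisely the normal projection of the quadratic coefficient. Applying this to $g\cdot X$ gives $\sigma_o(v_1,v_2)=B(v_1,v_2)\bmod T_o(S)=[v_1,[\overline u,v_2]]\bmod T_o(S)$, as claimed.

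The step I expect to be the real obstacle is the identification of the coordinate Hessian $B$ with the intrinsic second fundamental form: one must use that the Christoffel symbols of the chosen canonical metric vanish at $o$ in Harish-Chandra coordinates (so that no connection corrections enter the Hessian) and that the totally geodesic hypothesis makes $X$ genuinely linear rather than merely tangent to $V$ to first order. Granting the curve-case computation of \cite{MR1198602}, the remaining second-order group-factorization bookkeeping is routine, and the passage to the bilinear form is a straightforward polarization.
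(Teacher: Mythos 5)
Your proof is correct and follows essentially the same route as the paper: the paper's proof simply cites \cite[Lemma~4.3]{MR1198602} for the diagonal identity $\sigma_o(v,v)=[v,[\overline{u},v]] \bmod T_o(S)$ and then polarizes, which is exactly the skeleton of your argument. The additional material you supply (the Baker--Campbell--Hausdorff bookkeeping for the $M^-$-distortion of the Harish-Chandra cell, the symmetry of $[v_1,[\overline{u},v_2]]$ via the Jacobi identity and $[\mathfrak{m}^+,\mathfrak{m}^+]=0$, and the vanishing of the Christoffel symbols at $o$ identifying the coordinate Hessian with the second fundamental form) is a sound unpacking of the cited lemma rather than a different method.
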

\begin{proof}
For any $v\in T_o(X)$, from \cite[Lemma 4.3]{MR1198602} we know $\sigma_{o}(v,v)=[v,[\overline{u},v]] \mod T_{o}(X)$. By polarization argument the conclusion follows. 
\end{proof}

Then we can prove the following lemma which reduces the proof of the first statement to check a Lie bracket generating condition, we have 
\begin{lemma}
If $\text{span}[T_o(X),[\mathfrak{m}^-,T_o(X)]]=\mathfrak{m^+}=T_o(M)$, then for $u\in \mathfrak{m}^+$, $\exp{\overline{u}}.X=X$ if and only if $u\in T_o(X)$, and then $\exp(\mathfrak{m}^-).X$ gives an $(n-m)$-dimensional family of standard models.
\end{lemma}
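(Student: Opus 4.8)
The plan is to determine the subgroup of $M^-=\exp(\mathfrak m^-)$ fixing $X$ and to read off the dimension of the family from it. Since $\mathfrak m^-$ is abelian, $M^-$ is a vector group and the stabilizer $\{g\in M^-:g.X=X\}$ is a Zariski-closed, hence linear, algebraic subgroup $\exp(W)$ for a subspace $W\subseteq\mathfrak m^-$; moreover every element of $\mathfrak m^-$ has the form $\overline u$ with $u\in\mathfrak m^+$, because conjugation with respect to the compact real form interchanges $\mathfrak m^+$ and $\mathfrak m^-$. Thus the statement is equivalent to the single identity $W=\overline{T_o(X)}$, and once this is shown the family $\exp(\mathfrak m^-).X\cong M^-/\exp(W)$ has dimension $\dim\mathfrak m^--\dim W=n-m$. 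So it suffices to prove the equivalence $\exp(\overline u).X=X\iff u\in T_o(X)$.

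For the direction $(\Leftarrow)$ I would use the $(H_2)$-structure directly. Write $\mathfrak g_X=\mathfrak m_X^+ +\mathfrak l_X^{\mathbb C}+\mathfrak m_X^-$ for the graded subalgebra cutting out $X$, so that $X=G_X.o$ with $\mathfrak m_X^+=T_o(X)$. Because an $(H_2)$-embedding is induced by a homomorphism of Hermitian pairs defined over the compact real forms, conjugation preserves $\mathfrak g_X$ and sends $\mathfrak m_X^+$ to $\mathfrak m_X^-$. Hence $u\in T_o(X)=\mathfrak m_X^+$ gives $\overline u\in\mathfrak m_X^-\subset\mathfrak g_X$, so $\exp(\overline u)\in G_X$ preserves the orbit $X=G_X.o$. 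This already yields $\overline{T_o(X)}\subseteq W$, i.e. $\dim W\ge m$ and a family of dimension at most $n-m$.

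The direction $(\Rightarrow)$ is the heart of the matter. Suppose $\exp(\overline u).X=X$. Since $\exp(\overline u)\in P$ fixes $o$ and acts trivially on $T_o(M)$, the submanifold $\exp(\overline u).X$ has the same point and tangent space at $o$ as $X$, so the two coincide and in particular share their second fundamental form there. As $X$ is totally geodesic this form vanishes, and by Lemma \ref{2nd fundamental form under M-} we obtain
\[[v_1,[\overline u,v_2]]\in T_o(X)\qquad\text{for all } v_1,v_2\in T_o(X).\]
It remains to deduce $u\in T_o(X)$ from this together with the bracket generating hypothesis $\mathrm{span}\,[T_o(X),[\mathfrak m^-,T_o(X)]]=\mathfrak m^+$. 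After subtracting its $T_o(X)$-component using $(\Leftarrow)$ and the commutativity of $M^-$, I may assume $u$ is orthogonal to $T_o(X)$ for the positive definite Hermitian inner product $\langle a,b\rangle=-B(a,\overline b)$ attached to the compact real form, and the goal becomes $u=0$. The tool I would exploit is the self-adjointness relation $\mathrm{ad}(a)^*=-\mathrm{ad}(\overline a)$ for this inner product: it turns the vanishing-second-fundamental-form condition into the isotropy statement $B([\overline u,T_o(X)],[\overline u,T_o(X)])=0$ and, after expanding $u$ through the bracket generating hypothesis as $u=\sum_k[a_k,[\overline{c_k},b_k]]$ with $a_k,b_k\in T_o(X)$, into an expression $\|u\|^2=\sum_k B(\overline{c_k},p_k)$ with $p_k=[a_k,[\overline u,b_k]]\in T_o(X)$. \emph{I expect the main obstacle to be precisely showing that the bracket generating condition forces this combination to vanish}, equivalently that $[\overline u,\cdot]$ must annihilate $T_o(X)$; the abstract spanning statement alone is not enough, and the conjugation symmetry is essential. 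A clean way to organise it is to pass to Harish-Chandra coordinates, in which the totally geodesic $(H_2)$-space $X$ becomes the linear subspace $T_o(X)\subset\mathfrak m^+$ and $\exp(\overline u)$ acts by the explicit $M^-$-transvection (the generalized fractional-linear action on the Harish-Chandra cell); the global condition that this rational map preserve the linear subspace $T_o(X)$ then expands, order by order at $o$, into relations whose leading term is the displayed second fundamental form and which the bracket generating hypothesis should collapse to $u\in T_o(X)$. In the curve case this entire step degenerates to the bijectivity of the map $H$ in Lemma \ref{bijection}, which is a useful consistency check on the mechanism.

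Granting $(\Rightarrow)$, we get $W=\overline{T_o(X)}$ of dimension $m$, so $\exp(\mathfrak m^-).X$ is an $(n-m)$-dimensional family of standard models, as claimed. This is consistent with, and can be cross-checked against, the deformation-theoretic count used in the curve case: the same count identifies the normal-bundle sections and guarantees that the standard model realising the second fundamental form of a given $S$ is locally unique, which is what makes the holomorphic map from $S$ to the moduli space of standard models well defined.
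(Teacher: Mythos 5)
Your overall strategy coincides with the paper's: prove $(\Leftarrow)$ from the $(H_2)$/geodesic structure, and for $(\Rightarrow)$ feed the condition $[v_1,[\overline{u},v_2]]\in T_o(X)$ from Lemma \ref{2nd fundamental form under M-} into a Killing-form pairing, reducing to the component $u'$ of $u$ orthogonal to $T_o(X)$. But you stop exactly at the decisive step and say you expect it to be the main obstacle; the Harish-Chandra ``order by order'' suggestion you offer instead is not an argument, so as written the proof is incomplete. The missing observation is small and purely about how to use the spanning hypothesis compatibly with the orthogonal decomposition $\mathfrak{m}^+=T_o(X)\oplus T_o(X)^{\bot}$. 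Decompose the $\mathfrak{m}^-$ slot accordingly: by total geodesy $[T_o(X),[\overline{T_o(X)},T_o(X)]]\subset T_o(X)$, so the hypothesis $\text{span}[T_o(X),[\mathfrak{m}^-,T_o(X)]]=\mathfrak{m}^+$ gives
\[
\mathfrak{m}^+=T_o(X)+\text{span}\{[v,[\overline{w},v]]:\ v\in T_o(X),\ w\in T_o(X)^{\bot}\}.
\]
Now $\overline{u'}$ annihilates $T_o(X)$ under $B$ because $u'\in T_o(X)^{\bot}$, and it annihilates each generator of the second kind because of the symmetry $B([v,[\overline{u'},v]],\overline{w})=B([\overline{u'},v],[\overline{w},v])=B([v,[\overline{w},v]],\overline{u'})$ together with $[v,[\overline{u'},v]]\in T_o(X)$ and $w\in T_o(X)^{\bot}$. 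Hence $\overline{u'}$ pairs to zero with all of $\mathfrak{m}^+$ and $u'=0$ by nondegeneracy.

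In your notation the same point reads: when you expand $u=\sum_k[a_k,[\overline{c_k},b_k]]$, split each $c_k$ into its $T_o(X)$- and $T_o(X)^{\bot}$-components. The $T_o(X)$-part produces vectors lying in $T_o(X)$, which pair to zero with $\overline{u}=\overline{u'}$ directly; the $T_o(X)^{\bot}$-part produces, after your symmetry manipulation, terms $B(\overline{c_k},p_k)$ with $p_k\in T_o(X)$ and $c_k\in T_o(X)^{\bot}$, which vanish by definition of the orthogonal complement. So $\|u'\|^2=0$ and the proof closes; no passage to Harish-Chandra coordinates or higher-order expansion is needed. It is \emph{not} true, nor needed, that $[\overline{u},\cdot]$ must annihilate $T_o(X)$ as an intermediate step, so you should drop that reformulation. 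Everything else in your write-up (the identification of the stabilizer in $M^-$ with $\exp(\overline{T_o(X)})$ and the resulting dimension count $n-m$) is correct and matches the paper.
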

\begin{proof}
Since $X$ is a geodesic model, we know $[T_o(X),[\overline{T}_o(X),T_o(X)]] \subset T_o(X)$ and hence if $u\in T_o(X)$, $\exp{\overline{u}}.X=X$. For another direction, if $\exp{\overline{u}}.X=X$, then $[v,[\overline{u},v]]\in T_o(X)$ for all $v\in T_o(X)$, under the metric induced from the  Killing form $B$ we decompose $\mathfrak{m}^+=T_o(X)\oplus T_o(X)^{\bot}$. Then we have $B([v,[\overline{u'},v]], \overline{w})=0$ for all $w\in T_o(X)^\bot$ if we denote the projection of $u$ to $T_o(X)^{\bot}$ by $u'$. Hence $B([v,[\overline{w},v]], \overline{u'})=0$ for all $w\in T_o(X)^\bot$, since $\text{span}[T_o(X),[\mathfrak{m}^-,T_o(X)]]=\mathfrak{m^+}$ and $[T_o(X),[\overline{T}_o(X),T_o(X)]] \subset T_o(X)$ we know $u'$ is orthogonal to the whole $\mathfrak{m}^+$ thus $u'=0$ and $u\in T_o(X)$.
\end{proof}

Therefore it suffices to show the following Lie bracket generating condition 
\[(\dag):\text{span}[T_o(X),[\mathfrak{m}^-,T_o(X)]]=\mathfrak{m}^+\]
This condition is invariant under $P$-action, so without loss of generality we may assume that  $V=T_o(X)$.
We will check that all $(H_2)$-subspaces in irreducible compact Hermitian symmetric spaces with rank $\geq 2$ satisfy this condition. Classification of maximal $(H_2)$-subspace is given in the table in \cite[pp.27-28]{MR2127948}, an easy corollary obtained from the diagonal curve case shows that if 
$V$ contains a tangent vector of maximal rank on $M$ and $M$ is of tube type, then the Lie bracket generating condition $(\dag)$ will hold. We have typical examples of higher dimensional submanifolds for this corollary, like smooth linear section subquadrics in a hyperquadric and maximal polyspheres in tube type Hermitian symmetric spaces. Motivated by this, if $T_o(X)$ contains such a tangent vector, we will call the embedding $X\subset M$ is of diagonal type and the others are called non-diagonal type. We want to see whether the maximal $(H_2)$-subspaces will be of diagonal type. In the meantime, we will divide the cases into two categories from the classification, when the ambient space is of Type I,II,III, we will prove the generating condition based on the matrices expressions; when the ambient space is of Type IV or Type $E_6$, $E_7$, all $(H_2)$-subspaces can be easily written down and the checking is straightforward.

\subsection {When $M$ is a hyperquadric}
All $(H_2)$-subspaces in a hyperquadric can be classified as linear section subquadrics and diagonal curves in geodesic two-spheres. All of these are of diagonal type and hyperquadrics are of tube type, so the Lie bracket generating condition $(\dag)$ has already been satisfied.
\subsection{When $M$ is of $E_6$-type}
From the classification in \cite{MR2127948} we know all $(H_2)$-embedding in $E_6$ type ambient space is of diagonal type (but $E_6$ is not of tube type), and root systems will be used in this case, we have
\begin{lemma}\label{E6 span}
When $M$ is the $E_6$-type irreducible compact Hermitian symmetric space, for any $(H_2)$-embedding of $X\subset M$, denoting the tangent space of $X$ at the reference point $o$ by $V$, then $[V,[\mathfrak{m}^-,V]]$ spans the whole tangent space $T_o(M)\cong \mathfrak{m}^+$.
\end{lemma}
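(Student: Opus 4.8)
The plan is to verify the generating condition $(\dag)$ for the $E_6$-type space by an explicit root-theoretic computation, in the spirit of Lemma~\ref{bijection} but now exploiting the full tangent space $V$ rather than a single maximal-rank vector. First I would fix the $E_6$ root system in explicit coordinates and record the Harish-Chandra decomposition, so that $\mathfrak{m}^+$ is identified with its $16$ noncompact positive root spaces. Organizing these by their restriction to $\mathfrak{h}^-$, the Restricted Root Theorem in the non-tube case with $r=2$ partitions $\mathfrak{m}^+$ into the two spaces restricting to $\alpha_1,\alpha_2$, the six spaces restricting to the middle long root $\tfrac12(\alpha_1+\alpha_2)$, and the eight spaces restricting to the short roots $\tfrac12\alpha_1,\tfrac12\alpha_2$; these last eight directions are exactly the ones invisible to tube type, and are the source of all the difficulty.

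Next I would use the monotonicity of the condition: if $V\subseteq V'$ then $[V,[\mathfrak{m}^-,V]]\subseteq[V',[\mathfrak{m}^-,V']]$, so once $(\dag)$ holds for a subspace it holds for every larger one. Since the classification in \cite{MR2127948} shows that every $(H_2)$-subspace of the $E_6$-type space is of diagonal type, each relevant $V$ contains the maximal-rank vector $\eta=e_{\alpha_1}+e_{\alpha_2}$, and it suffices to run through the finitely many minimal diagonal-type $(H_2)$-subspaces read off from that classification. For each such $V$, the image $[\eta,[\mathfrak{m}^-,\eta]]=H(\mathfrak{m}^-)$ already fills out the ``tube part'', namely the root spaces restricting to $\alpha_1,\alpha_2$ and to $\tfrac12(\alpha_1+\alpha_2)$, by precisely the compatible-triple argument used in the proof of Lemma~\ref{bijection}. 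Thus the only thing left to produce is the eight short-root directions, which is where the non-tube type case genuinely departs from the corollary quoted above.

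The crux is therefore to generate the short-root spaces $\tfrac12\alpha_i$ from the off-diagonal brackets $[v_1,[\overline{u},v_2]]$ with $v_1,v_2\in V$, at least one of which is a short-root vector lying in $V$. Tracking restricted roots, a short-root vector $v_1$ of type $\tfrac12\alpha_1$, taken together with $v_2=\eta$ and a middle long-root direction $e_{-\beta}\in\mathfrak{m}^-$, lands in the conjugate short-root space, since $\tfrac12\alpha_1+\alpha_2-\tfrac12(\alpha_1+\alpha_2)=\tfrac12\alpha_2$, whereas pairing with $e_{-\alpha_i}$ merely reproduces the given direction. Iterating these two moves and invoking the signed-permutation symmetry of $\Pi$ from the Restricted Root Theorem, one reaches all eight short directions, provided $V$ supplies enough short-root vectors to begin with, which the explicit description of each $(H_2)$-subspace in the classification guarantees.

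I expect the short-root step to be the main obstacle. Unlike the tube-type computation, where each output was (up to a nonzero constant) a single root vector, here the relevant bracket is a sum of several terms carrying the structure constants $N_{\alpha,\beta}$, and one must check that these do not cancel and that the outputs actually span the full four-dimensional $\tfrac12\alpha_1$- and $\tfrac12\alpha_2$-spaces. The cleanest way to settle this is a direct root-vector computation in the coordinates fixed in the first step, confirming case by case that $[V,[\mathfrak{m}^-,V]]=\mathfrak{m}^+$ and hence establishing $(\dag)$.
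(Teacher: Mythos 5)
Your overall strategy is the same as the paper's: invoke the classification of $(H_2)$-subspaces of the $E_6$-type space, use monotonicity of the generating condition to reduce to the two minimal cases ($\mathbb{P}^2$ diagonally embedded via $\mathbb{P}^2\times\mathbb{P}^2\hookrightarrow G(2,4)\hookrightarrow M$, and $\mathbb{P}^5\times\mathbb{P}^1\hookrightarrow M$), and then verify the span by an explicit computation in the $E_6$ root system. Where you differ is in the bookkeeping: you stratify $\mathfrak{m}^+$ by restricted roots, dispose of the ``tube part'' via the compatible-triple mechanism of Lemma~\ref{bijection}, and isolate the eight short-root directions as the genuine difficulty. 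The paper instead runs a single uniform check: writing the spanning vectors of $V$ as $e_{\mu}+e_{\nu}$ (e.g.\ $e_{\alpha_1}+e_{\gamma-\alpha_6}$ and $e_{\alpha_1+\alpha_2}+e_{\gamma}$ for $\mathbb{P}^2$), it lists the $16$ noncompact positive roots and verifies that every such root $\delta$ admits a partner $\delta'$ with $\delta+\delta'$ equal to one of the weight sums $\mu+\nu$, so that bracketing with $e_{-\delta'}\in\mathfrak{m}^-$ produces $e_{\delta}$. This pairing criterion handles long, middle and short restricted roots on the same footing and simultaneously controls the cross-terms you worry about.

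Two cautions on your version. First, your premise that $V$ ``supplies short-root vectors'' is not literally true: for the $\mathbb{P}^2$ case $V$ is two-dimensional and its spanning vectors are sums such as $e_{\alpha_1+\alpha_2}+e_{\gamma}$, whose two summands lie in the \emph{different} short restricted-root spaces $\tfrac12\alpha_1$ and $\tfrac12\alpha_2$; no pure short-root vector lies in $V$. Your bracket computations must therefore be expanded over these summands, and the resulting outputs are a priori sums of root vectors from several restricted-root spaces — exactly the cancellation/spanning issue you flag. Second, you defer precisely this decisive verification (``a direct root-vector computation \dots case by case''), which is the entire content of the lemma; the paper's pairing check \emph{is} that computation, and until you carry it out for both minimal subspaces your argument is a correct outline rather than a proof.
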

\begin{proof}
We refer the reader to the table given in \cite[p.28]{MR2127948}, chains of $(H_2)$-subspaces can be classified as $\mathbb{P}^2 \stackrel{diag}{\hookrightarrow}\mathbb{P}^2\times \mathbb{P}^2 \hookrightarrow G(2,4) \hookrightarrow M$ or $\mathbb{P}^5\times \mathbb{P}^1\hookrightarrow M$. It suffices to check $\mathbb{P}^2$ and $\mathbb{P}^5\times \mathbb{P}^1\hookrightarrow M$ cases. We adapt the root system notations in \cite[p.290]{MR0214807}, where the extended Dynkin diagram is as follows

\begin{figure}[h]
	\begin{tikzpicture}
	\draw[thick] (0,0) -- (6,0)  (3,0) -- (3,-1.5) (3,-1.5) -- (1.5,-1.5);
	
	\draw[ thick, fill=black] (0,0) circle (3pt) node[above, outer sep=3pt]{$\alpha_1$};
	\draw[ thick, fill=white] (1.5,0) circle (3pt) node[above, outer sep=3pt]{$\alpha_2$};
	\draw[ thick, fill=white] (3,0) circle (3pt) node[above, outer sep=3pt]{$\alpha_3$};
	\draw[ thick, fill=white] (4.5,0) circle (3pt) node[above,outer sep=3pt]{$\alpha_4$};
	\draw[ thick, fill=white] (6,0) circle (3pt) node[above,outer sep=3pt]{$\alpha_5$};
	\draw[ thick, fill=white] (3,-1.5) circle (3pt) node[right,outer sep=3pt]{$\alpha_6$};
	\draw[ thick, fill=black] (1.5,-1.5) circle (3pt) node[above,outer sep=3pt]{$-\gamma$};
	\end{tikzpicture} 
	\caption{Extended Dyndin diagram of $E_6$} \label{fig:E6}
\end{figure}	

Here the highest root $\gamma=\alpha_1+2\alpha_2+3\alpha_3+2\alpha_4+\alpha_5+2\alpha_6$. Then the tangent space of $X=\mathbb{P}^2$ is spanned by $e_{\alpha_1}+e_{\gamma-\alpha_6}$ and $e_{\alpha_1+\alpha_2}+e_{\gamma}$. All the positive noncompact roots can be listed according to the coefficients of $\alpha_1,...,\alpha_6$ in order, which are \[
\begin{gathered}
(100000)=\alpha_1, (110000)=\alpha_1+\alpha_2, (111000), (111001) \\
(111100), (111101), (111110), (112101) \\
(111111), (122101), (112111), (122111) \\
(112211), (122211), (123211)=\gamma-\alpha_6, (123212)=\gamma \\
\end{gathered}\]
One can easily check that any positive noncompact root $\delta$ has a pairing $(\delta,\delta')$ such that $\delta+\delta'=\alpha_1+\gamma-\alpha_6$ or $\delta+\delta'=\alpha_1+\alpha_2+\gamma$, thus every positive noncompact root vector is in the space span$[V,[\mathfrak{m}^-,V]]$.

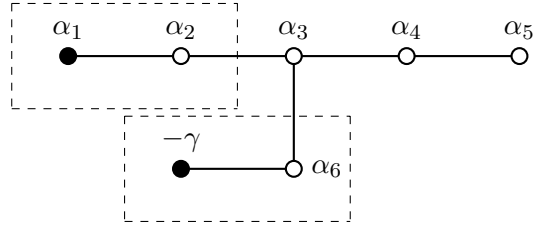
\begin{figure}[h]
	\begin{tikzpicture}
	\draw[thick] (0,0) -- (6,0)  (3,0) -- (3,-1.5) (3,-1.5) -- (1.5,-1.5);
	\draw[dashed](-0.75,0.7)--(2.25,0.7)
	(-0.75,-0.7)--(2.25,-0.7) (-0.75,0.7)--(-0.75,-0.7) (2.25,0.7)--(2.25,-0.7);
	\draw[dashed](0.75,-0.8)--(3.75,-0.8)
	(0.75,-2.2)--(3.75,-2.2) (0.75,-0.8)--(0.75,-2.2) (3.75,-0.8)--(3.75,-2.2);
	
	\draw[ thick, fill=black] (0,0) circle (3pt) node[above, outer sep=3pt]{$\alpha_1$};
	\draw[ thick, fill=white] (1.5,0) circle (3pt) node[above, outer sep=3pt]{$\alpha_2$};
	\draw[ thick, fill=white] (3,0) circle (3pt) node[above, outer sep=3pt]{$\alpha_3$};
	\draw[ thick, fill=white] (4.5,0) circle (3pt) node[above,outer sep=3pt]{$\alpha_4$};
	\draw[ thick, fill=white] (6,0) circle (3pt) node[above,outer sep=3pt]{$\alpha_5$};
	\draw[ thick, fill=white] (3,-1.5) circle (3pt) node[right,outer sep=3pt]{$\alpha_6$};
	\draw[ thick, fill=black] (1.5,-1.5) circle (3pt) node[above,outer sep=3pt]{$-\gamma$};
	\end{tikzpicture} 
	\caption{$(H_2)$-embedding of $\mathbb{P}^2\times \mathbb{P}^2$} \label{fig:P2}
\end{figure}

\par For the embedding $X=\mathbb{P}^5\times \mathbb{P}^1\hookrightarrow M$ we know the tangent space of $\mathbb{P}^5\times \mathbb{P}^1$
is spanned by the root vectors with roots containing no $\alpha_6$ term and the root vector $e_{\gamma}$, we can check the conclusion in a similar manner.  

\begin{figure}[h]
	\begin{tikzpicture}
	\draw[thick] (0,0) -- (6,0)  (3,0) -- (3,-1.5) (3,-1.5) -- (1.5,-1.5);
	\draw[dashed](-0.75,0.7)--(6.75,0.7)
	(-0.75,-0.7)--(6.75,-0.7) (-0.75,0.7)--(-0.75,-0.7) (6.75,0.7)--(6.75,-0.7);
	\draw[dashed](0.75,-0.8)--(2.25,-0.8)
	(0.75,-2.2)--(2.25,-2.2) (0.75,-0.8)--(0.75,-2.2) (2.25,-0.8)--(2.25,-2.2);
	
	\draw[ thick, fill=black] (0,0) circle (3pt) node[above, outer sep=3pt]{$\alpha_1$};
	\draw[ thick, fill=white] (1.5,0) circle (3pt) node[above, outer sep=3pt]{$\alpha_2$};
	\draw[ thick, fill=white] (3,0) circle (3pt) node[above, outer sep=3pt]{$\alpha_3$};
	\draw[ thick, fill=white] (4.5,0) circle (3pt) node[above,outer sep=3pt]{$\alpha_4$};
	\draw[ thick, fill=white] (6,0) circle (3pt) node[above,outer sep=3pt]{$\alpha_5$};
	\draw[ thick, fill=white] (3,-1.5) circle (3pt) node[right,outer sep=3pt]{$\alpha_6$};
	\draw[ thick, fill=black] (1.5,-1.5) circle (3pt) node[above,outer sep=3pt]{$-\gamma$};
	\end{tikzpicture} 
	\caption{$(H_2)$-embedding of $\mathbb{P}^5\times \mathbb{P}^1$} \label{fig:P5}
\end{figure}
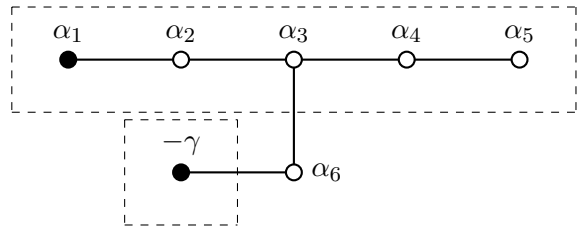

\end{proof}

\subsection{When $M$ is of $E_7$-type.} All $(H_2)$-subspaces have been classified in \cite{MR2127948}. As $E_7$ is of tube type, the diagonal type cases has been done from previous discussion, we only need to check the non-diagonal type cases, we have 
\begin{lemma}\label{E7 span}
When $M$ is the $E_7$-type irreducible compact Hermitian symmetric space, for any $(H_2)$-embedding of $X\subset M$, denoting the tangent space of $X$ at the reference point $o$ by $V$, then $[V,[\mathfrak{m}^-,V]]$ spans the whole tangent space $T_o(M)\cong \mathfrak{m}^+$.
\end{lemma}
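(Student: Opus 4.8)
The argument runs on the same template as Lemma~\ref{E6 span}, with the $E_6$ root data replaced by that of $E_7$; the only genuinely new input is bookkeeping inside a larger root system. The plan is first to cut down the number of cases. Since $E_7$ is of tube type, the tube-type corollary obtained from the diagonal curve case shows that any $(H_2)$-embedding whose tangent space $V$ contains a vector of maximal rank $r=3$ already satisfies $(\dag)$, so only the non-diagonal type embeddings remain. Moreover, if $V'\subseteq V$ then $[V',[\mathfrak{m}^-,V']]\subseteq[V,[\mathfrak{m}^-,V]]$, so verifying $(\dag)$ for the \emph{minimal} $(H_2)$-subspace of a chain forces it for every larger member. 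Running through the classification of $(H_2)$-subspaces of $E_7$ in \cite{MR2127948} and passing to the bottom of each chain leaves a short finite list of minimal models; every chain whose minimal member is of diagonal type is disposed of at once by the corollary, so only the chains with non-diagonal minimal member require hand computation.

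Next I would fix the root system of $E_7$ exactly as in the proof of Lemma~\ref{bijection}, using the coordinates $x_1,\dots,x_7$ and $d=\sum_{i=1}^7 x_i$. The $27$ positive noncompact roots are then $x_1-x_i\ (2\le i\le 7)$, $x_1+x_i+x_j\ (2\le i<j\le 7)$ and $d-x_i\ (2\le i\le 7)$, the same list already written there. For each minimal non-diagonal model $X$ on my list I read off $V=T_o(X)$ from the marked diagram as the span of a few vectors, each of which is either a single root vector $e_\beta$ or a ``diagonal'' sum $e_{\beta}+e_{\beta'}$ whose two roots share a fixed total weight $\Sigma=\beta+\beta'$, in direct analogy with the spanning vectors $e_{\alpha_1}+e_{\gamma-\alpha_6}$ and $e_{\alpha_1+\alpha_2}+e_{\gamma}$ of the $E_6$ case.

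The heart of the proof is the production mechanism from Lemma~\ref{2nd fundamental form under M-}: the space $[V,[\mathfrak{m}^-,V]]$ is spanned by the brackets $[w_1,[e_{-\psi},w_2]]$ with $w_1,w_2$ spanning vectors of $V$ and $\psi$ a positive noncompact root. Expanding such a bracket, the cross terms contribute $e_{\beta+\beta'-\psi}$ up to the structure constants $N_{\cdot,\cdot}$, which are nonzero as soon as the relevant sums are roots by Theorem~5.5 of \cite[Chapter~III]{MR1834454}. Thus, to place a given $e_\delta$ in the span it suffices to exhibit a \textbf{pairing}: a positive noncompact root $\psi$ with $\delta+\psi=\Sigma$ the total weight of one spanning vector (or, using two distinct spanning vectors, $\delta=\beta_1+\beta_3-\psi$), with the intermediate combinations $\beta'-\psi$ and so on being roots. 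I would then tabulate one such pairing for each of the $27$ roots and each model, exactly as in the $E_6$ list, and conclude that every positive noncompact root vector lies in $[V,[\mathfrak{m}^-,V]]$, hence $[V,[\mathfrak{m}^-,V]]=\mathfrak{m}^+$.

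The step I expect to be the real obstacle is twofold. First, correctly extracting the minimal non-diagonal $(H_2)$-subspaces of $E_7$ from \cite{MR2127948} and writing down their tangent spaces in the $x_i$-coordinates; this is mechanical but error-prone. Second, and more delicate, guaranteeing that the cross term $e_\delta$ actually survives: when a single spanning vector $w_1=w_2=e_\beta+e_{\beta'}$ is used, the two symmetric cross terms carry coefficients $N_{-\psi,\beta'}N_{\beta,\beta'-\psi}$ and $N_{-\psi,\beta}N_{\beta',\beta-\psi}$, which could in principle cancel. Where cancellation threatens I would instead use two distinct spanning vectors $w_1\neq w_2$, or a different admissible $\psi$, so that the target coefficient is manifestly nonzero; the Restricted Root Theorem guarantees enough pairings are available for this to always be possible. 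Once every positive noncompact root is produced, combining with the tube-type corollary for the diagonal type cases establishes $(\dag)$ for all $(H_2)$-subspaces and completes the proof.
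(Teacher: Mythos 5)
Your plan coincides with the paper's proof: since $E_7$ is of tube type the diagonal-type cases are disposed of by the tube-type corollary, and the remaining non-diagonal chains reduce to the two minimal models $\mathbb{P}^3\stackrel{diag}{\hookrightarrow}\mathbb{P}^3\times\mathbb{P}^3\hookrightarrow G(2,6)\hookrightarrow M$ and $\mathbb{P}^5\times\mathbb{P}^2\hookrightarrow M$, for which the paper lists the $27$ positive noncompact roots and exhibits, for each $\delta$, a pairing $\delta+\delta'$ equal to the total weight of one of the spanning vectors $e_{\alpha_1}+e_{\gamma}$, $e_{\alpha_1+\alpha_2}+e_{\gamma-\alpha_6}$, $e_{\alpha_1+\alpha_2+\alpha_3}+e_{\gamma-\alpha_5-\alpha_6}$ --- exactly the tabulation you defer. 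Your remark on possible cancellation of the structure constants $N_{\cdot,\cdot}$ in the symmetric cross terms flags a subtlety the paper's ``one can easily check'' glosses over, and your remedy (switching to distinct spanning vectors or another admissible $\psi$) is a refinement of, not a departure from, the same argument.
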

\begin{proof}
We refer the reader to the table given in \cite[p.28]{MR2127948}, we only need to consider the non-diagonal type, chains of such $(H_2)$-subspaces can be classified as $\mathbb{P}^3 \stackrel{diag}{\hookrightarrow}\mathbb{P}^3\times \mathbb{P}^3 \hookrightarrow G(2,6) \hookrightarrow M$ or $\mathbb{P}^5\times \mathbb{P}^2\hookrightarrow M$. It suffices to check $\mathbb{P}^3$ and  $\mathbb{P}^5\times \mathbb{P}^2\hookrightarrow M$ cases. We adapt the root system notations in \cite[p.291]{MR0214807}, the extended Dynkin diagram is as follows

\begin{figure}[h]
	\begin{tikzpicture}
	\draw[thick] (0,0) -- (9,0)  (4.5,0) -- (4.5,-1.5) ;
	
	\draw[ thick, fill=black] (0,0) circle (3pt) node[above, outer sep=3pt]{$\alpha_1$};
	\draw[ thick, fill=white] (1.5,0) circle (3pt) node[above, outer sep=3pt]{$\alpha_2$};
	\draw[ thick, fill=white] (3,0) circle (3pt) node[above, outer sep=3pt]{$\alpha_3$};
	\draw[ thick, fill=white] (4.5,0) circle (3pt) node[above,outer sep=3pt]{$\alpha_4$};
	\draw[ thick, fill=white] (6,0) circle (3pt) node[above,outer sep=3pt]{$\alpha_5$};
	\draw[ thick, fill=white] (7.5,0) circle (3pt) node[above,outer sep=3pt]{$\alpha_6$};
	\draw[ thick, fill=black] (9,0) circle (3pt) node[above,outer sep=3pt]{$-\gamma$};
	\draw[ thick, fill=white] (4.5,-1.5) circle (3pt) node[right,outer sep=3pt]{$\alpha_7$};
	\end{tikzpicture}
	\caption{Extended Dyndin diagram of $E_7$}
	\label{fig:E7}
\end{figure}
Here the highest root $\gamma=\alpha_1+2\alpha_2+3\alpha_3+4\alpha_4+3\alpha_5+2\alpha_6+2\alpha_7$. Then the tangent space of $X=\mathbb{P}^3$ is spanned by $e_{\alpha_1}+e_{\gamma}$, $e_{\alpha_1+\alpha_2}+e_{\gamma-\alpha_6}$ and $e_{\alpha_1+\alpha_2+\alpha_3}+e_{\gamma-\alpha_5-\alpha_6}$. All the positive noncompact roots can be listed according to the coefficients of $\alpha_1,...,\alpha_7$ in order, which are \[
\begin{gathered}
(1000000)=\alpha_1, (1100000)=\alpha_1+\alpha_2, (1110000)=\alpha_1+\alpha_2+\alpha_3\\ 
(1111000), (1111001), (1111100), (1111110) \\
(1111101), (1111111), (1112101), (1112111) \\
(1122101), (1122111), (1122111), (1222101) \\
(1122211), (1222111),
(1123211), (1222211)\\
(1123212), (1223211),
(1223212), (1233211),  (1233212),
\\
(1234212)=\gamma-\alpha_6-\alpha_5,
(1234312)=\gamma-\alpha_6, (1234322)=\gamma \\
\end{gathered}\]
One can easily check that any positive noncompact root $\delta$ has a pairing $(\delta,\delta')$ such that $\delta+\delta'=\alpha_1+\gamma$ or $\delta+\delta'=\alpha_1+\alpha_2+\gamma-\alpha_6$ or $\delta+\delta'=\alpha_1+\alpha_2+\alpha_3+\gamma-\alpha_5-\alpha_6$, thus every positive noncompact root vector is in the space span$[V,[\mathfrak{m}^-,V]]$.

\begin{figure}[h]
	\begin{tikzpicture}
	\draw[thick] (0,0) -- (9,0)  (4.5,0) -- (4.5,-1.5) ;
	\draw[dashed](-0.75,0.7)--(3.75,0.7)
	(-0.75,-0.7)--(3.75,-0.7) (-0.75,0.7)--(-0.75,-0.7) (3.75,0.7)--(3.75,-0.7);
	\draw[dashed](5.25,0.7)--(9.75,0.7)
	(5.25,-0.7)--(9.75,-0.7) (5.25,0.7)--(5.25,-0.7) (9.75,0.7)--(9.75,-0.7);
	
	\draw[ thick, fill=black] (0,0) circle (3pt) node[above, outer sep=3pt]{$\alpha_1$};
	\draw[ thick, fill=white] (1.5,0) circle (3pt) node[above, outer sep=3pt]{$\alpha_2$};
	\draw[ thick, fill=white] (3,0) circle (3pt) node[above, outer sep=3pt]{$\alpha_3$};
	\draw[ thick, fill=white] (4.5,0) circle (3pt) node[above,outer sep=3pt]{$\alpha_4$};
	\draw[ thick, fill=white] (6,0) circle (3pt) node[above,outer sep=3pt]{$\alpha_5$};
	\draw[ thick, fill=white] (7.5,0) circle (3pt) node[above,outer sep=3pt]{$\alpha_6$};
	\draw[ thick, fill=black] (9,0) circle (3pt) node[above,outer sep=3pt]{$-\gamma$};
	\draw[ thick, fill=white] (4.5,-1.5) circle (3pt) node[right,outer sep=3pt]{$\alpha_7$};
	\end{tikzpicture}
	\caption{$(H_2)$-embedding of $\mathbb{P}^3\times \mathbb{P}^3$}
	\label{fig:P3}
\end{figure}
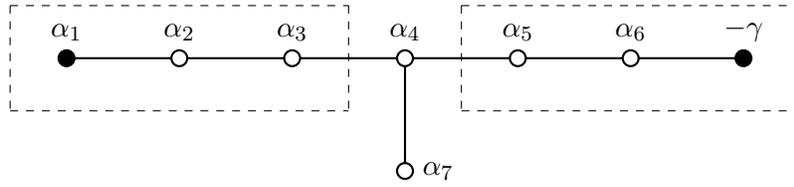

\par For the embedding $X=\mathbb{P}^5\times \mathbb{P}^2\hookrightarrow M$ we know the tangent space of $\mathbb{P}^5\times \mathbb{P}^2$
is spanned by the root vectors with roots containing no $\alpha_5$ term and the root vector $e_{\gamma}$, similar procedure can be done.
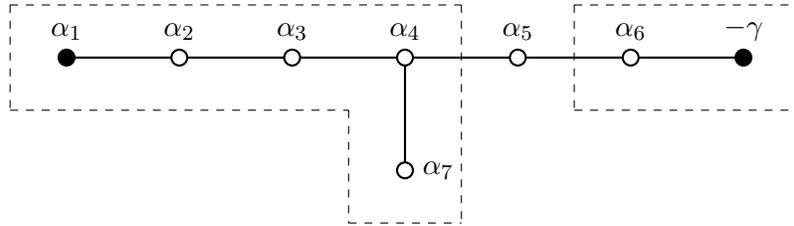
\begin{figure}[h]
	\begin{tikzpicture}
	\draw[thick] (0,0) -- (9,0)  (4.5,0) -- (4.5,-1.5) ;
	\draw[dashed](-0.75,0.7)--(5.25,0.7)
	(-0.75,-0.7)--(3.75,-0.7) (-0.75,0.7)--(-0.75,-0.7) (5.25,0.7)--(5.25,-2.2)
	(3.75,-0.7)--(3.75,-2.2) (3.75,-2.2)--(5.25,-2.2)
	;
	\draw[dashed](6.75,0.7)--(9.75,0.7)
	(6.75,-0.7)--(9.75,-0.7) (6.75,0.7)--(6.75,-0.7) (9.75,0.7)--(9.75,-0.7);
	
	\draw[ thick, fill=black] (0,0) circle (3pt) node[above, outer sep=3pt]{$\alpha_1$};
	\draw[ thick, fill=white] (1.5,0) circle (3pt) node[above, outer sep=3pt]{$\alpha_2$};
	\draw[ thick, fill=white] (3,0) circle (3pt) node[above, outer sep=3pt]{$\alpha_3$};
	\draw[ thick, fill=white] (4.5,0) circle (3pt) node[above,outer sep=3pt]{$\alpha_4$};
	\draw[ thick, fill=white] (6,0) circle (3pt) node[above,outer sep=3pt]{$\alpha_5$};
	\draw[ thick, fill=white] (7.5,0) circle (3pt) node[above,outer sep=3pt]{$\alpha_6$};
	\draw[ thick, fill=black] (9,0) circle (3pt) node[above,outer sep=3pt]{$-\gamma$};
	\draw[ thick, fill=white] (4.5,-1.5) circle (3pt) node[right,outer sep=3pt]{$\alpha_7$};
	\end{tikzpicture}
	\caption{$(H_2)$-embedding of $\mathbb{P}^5\times \mathbb{P}^2$}
	\label{fig:P5P2}
\end{figure}	
\end{proof}

\subsection{When $M$ is of Type I, II, III}
The rest are dealt by matrices expressions.
We will check whether some of the maximal $(H_2)$-subspaces are of diagonal type and then prove the Lie bracket generating condition $(\dag)$. When $X\subset M$ is of non-diagonal type or when the ambient space is not of tube type, the corrollary obtained from diagonal curve case no long works, however, we can obtain some idea from the tube type case, for instance when $M$ is a Grassmannian $G(n,n) (n\geq 2)$, the tangent space can be identified with  $n\times n$ matrices, we know a tangent vector $v$ of maximal rank correponding to a matrix of full rank generates all $n\times n$ matrices under the Lie bracket $[v,[\mathfrak{m}^-,v]]$, then in general for $G(p,q) (p\geq q\geq 2)$, we identify the tangent space as
\[ T_o(M)\cong \mathfrak{m}^+=\{\begin{bmatrix}
0 & A \\
0 & 0\\
\end{bmatrix}:A\ \text{is a $p\times q$ matrix}\}   \]
if a tangent vector $v$ is identified with a  $p\times q$ matrix with only one maximal principal submatrix of full rank, for instance $A$ is of the form
\[ \left[
\begin{BMAT}(e)[2pt,3cm,2cm]{c.c.c}{c.c}
0 & A' & 0\\ 0 & 0 &0
\end{BMAT}
\right] \]
where $A'$ is an $r\times r$ matrix of full rank. Then the Lie bracket $[v,[\mathfrak{m}^-,v]]$ will generate all matrices of the form 
\[ \left[
\begin{BMAT}(e)[2pt,3cm,2cm]{c.c.c}{c.c}
0 & * & 0\\ 0 & 0 &0
\end{BMAT}
\right] \]
If such matrices corresponding to tangent vectors in $V=T_o(X)$ can cover all the $pq$ positions in the matrix, we call the $(H_2)$-embedding satisfies the \textbf{Condition C}. One can easily obtain that if \textbf{Condition C} is satisfied, then $[V,[\mathfrak{m}^-,V]]$ will generate $\mathfrak{m}^+\cong T_o(M)$. When the ambient space is Type II or Type III, in similar mannar we just replace the matrices $A$ and the principal submatrices by antisymmetric matrices and symmetric matrices respectively. Then we reduce the problem to show that all $(H_2)$-embeddings in Type I,II,III ambient spaces satisfy \textbf{Condition C}. At first we check whether some maximal $(H_2)$-subspaces in the Grassmannian are of diagonal type, and furthermore check whether they will satisfy \textbf{Condition C}.
\subsubsection{Even dimensional hyperquadric into Grassmannian}
In this and next part, we use \cite{MR1031992} and \cite{MR0196134} as a general reference for spin representations and the embedding from hyperquadric into Grassmannian.
We check that image of the embedding $Q^{2\ell} \rightarrow G(2^{\ell-1},2^{\ell-1})$ induced by spin representation has tangent vector of maximal rank in 
$ G(2^{\ell-1},2^{\ell-1})$, i.e. the embedding is of diagonal type. Let $\mathfrak{g}_1=\mathfrak{so}(2\ell+2,\mathbb{C})$ be the Lie algebra of the automorphism group of $Q^{2\ell}$ and in block form 
\[\mathfrak{g}_1=\{\begin{bmatrix}
A & B \\
C & D\\
\end{bmatrix}:A=-A^t,D=-D^t,B=-C^t\}\]
The holomorphic tangent space can be identified with 
\[\mathfrak{m}_1^+=\{\begin{bmatrix}
0 & B \\
-B^t & 0\\
\end{bmatrix}:B=(\sqrt{-1}Z,Z), Z\ \text{is a $2\ell\times 1$ matrix}\}\]
On the other hand we consider the Clifford algebra $C=C^{\mathbb{C}}(2\ell+2)$ associated to a complex vector space $E=\mathbb{C}^{2\ell+2}$ and the standard nondegenerate quadratic form $q$ such that $q(v,v)=-v^2$ for any $v\in E$, choose a basis $\{e_1,...e_{2\ell+2}\}$ with $q(e_i,e_j)=\delta_{ij}$ we know as a vector space
\[\mathfrak{spin}(2\ell+2)=\text{span}\{e_ie_j\in C:1\leq i<j\leq 2\ell+2\}\]
and together with the operation of taking commutators it is isomorphic to $\mathfrak{so}(2\ell+2,\mathbb{C})$, and more precisely the isomorphism is given by \[ L_{ij}=E_{ij}-E_{ji}\longleftrightarrow \frac{1}{2}e_ie_j\]
where $L_{ij}$ gives a basis for antisymmetric complex $(2\ell+2)\times (2\ell+2)$ matrices.

\par We also introduce an explicit spinor module using the exterior algebra $S=\bigwedge^*W$ where $W$ is an $(\ell+1)$-dimensional complex vector space with an orthonormal basis $\{w_1,...w_{\ell+1}\}$ (for some Hermitian inner product on $W$). For any $w\in W$, it induces two operations on $S$, the first one is given by (left) exterior multiplication, we denote it by $w\wedge$, the second one is the adjoint operation of $w\wedge$ on $S$ with respect to the Hermitian inner product induced by the Hermitian inner product on $W$, we will denote it by $w\lfloor$. Then $w_i\wedge,w_i\lfloor$ generates the algebra $\text{End}(S)$ which also carries Clifford algebra structure, for simplicity we use the notation $a_i^\dagger=w_i\wedge, a_i=w_i\lfloor$. 
Let $\{ , \}$ be a nondegenerate quadratic form on the complex vector space spanned by all $a_i^{\dagger},a_i$ satisfying $\{a_i,a_j\}=\{a_i^\dagger,a_j^\dagger\}=0$
and $\{a_i,a^{\dagger}_j\}=\delta_{ij}$. The restriction $ab+ba=-\{a,b\}$ together with the nondegenerate quadratic form $\frac{1}{2}\{,\}$ gives a Clifford algebra structure of $\text{End}(S)$.

\par In fact, $W$ can be constructed by letting $w_i=\frac{1}{2}(e_{2i-1}+\sqrt{-1}e_{2i})$, where the Hermitian inner product can be chosen such that $\{w_i\}_i$ is orthonormal. In this setting we know in the Clifford algebra $C$\[
\begin{gathered}
w^2_i=0,w_iw_j=-w_jw_i\\
\overline{w}_iw_j=-w_{j}\overline{w}_i-\delta_{ij}
\end{gathered}
\]
Let $\overline{w}_{N}=\overline{w}_1\cdots\overline{w}_{\ell+1}$, then by canonically identifying $S$ with a subalgebra of $C$ generated by $W$ (still denoted by $S$), we know $S\overline{w}_N$ is a left ideal of the Clifford algebra and for each $e\in C$, there is a unique linear transformation $L(e)$ on $S$ such that $ex\overline{w}_N=L(e)x\overline{w}_N$, then through $L$ we can identify $C\cong \text{End}(S)$ as an isomorphism of Clifford algebras, this gives the spin representation by restricting the Clifford algebra on $\mathfrak{spin}(2\ell+2)$. More precisely one can easily check that the action $L(w_i)$ and $L(\overline{w}_i)$ are actually the action $a^\dagger_i$ and $-a_i$ on the exterior algebra $S$.  

\par Therefore the generators of the Clifford algebra $C$ can be identified with generators of $\text{End}(S)$ by 
\[e_{2j-1}\longleftrightarrow a_j^\dagger-a_j,  e_{2j}\longleftrightarrow -\sqrt{-1}(a_j^\dagger+a_j)\]
In this case the spin representation is not irreducible, we denote by $C^{\pm}$ the subspaces of $C$ generated by even and odd elements respectively and $C^+$ is a subalgebra. For $e\in \mathfrak{spin}(2\ell+2)\subset C^{+}$, the action $L(e)$ preserves $S^+=\bigwedge^{even}W$ and $S^-=\bigwedge^{odd}W$ and induces two so-called half-spin representations, we consider the representation on $S^+$ whose dimension is $2^\ell$ then this gives an $(H_2)$-embedding of $Q^{\ell}$ into $G(2^{\ell-1},2^{\ell-1})$.
\par Now we choose a vector in $\mathfrak{m}_1^+$ given by $Z=(-\sqrt{-1},0,...,0)^t$ then it can be identified with $\frac{1}{2}(e_1e_{2\ell+1}-\sqrt{-1}e_1e_{2\ell+2})$.
Through the spin representation $\frac{1}{2}(e_1e_{2\ell+1}-\sqrt{-1}e_1e_{2\ell+2})=-(a_1^\dagger-a_1)a_{\ell+1}$, we choose the standard basis of $S^+$ given by $w_{i_1}\wedge...\wedge w_{i_k}$ ($k$ is even), then the operation on $S^+$ annihilates the terms containing no $w_{\ell+1}$ term and this operation on the subspace generated by the terms containing $w_{\ell+1}$ has $2^{\ell-1}$-dimensional image. Thus the image of the tangent vector given by $Z=(-\sqrt{-1},0,...,0)^t$ is a tangent vector of maximal rank in the tangent space of $G(2^{\ell-1},2^{\ell-1})$.

\subsubsection{Odd dimensional hyperquadric into Grassmannian}
Next we continue to check that the embedding $Q^{2\ell-1} \rightarrow G(2^{\ell-1},2^{\ell-1})$ induced by spin representation is also of diagonal type. In this case the representation is irreducible. Let $C=C^{\mathbb{C}}(2\ell+1)$ be the Clifford algebra associated to a complex vector space $E=\mathbb{C}^{2\ell+1}$ and the standard nondegenerate quadratic form $q$, we still denote the basis by $e_1,...,e_{2\ell+1}$ satisfying $q(e_i,e_j)=\delta_{ij}$. Let $E'$ be the subspace generated by $e_i(i
\leq 2\ell-2), e_{2\ell}, e_{2\ell+1}$, there is a Clifford algebra $C'$ associated to $E'$ together with the standard nondegenerate quadratic form induced from $q$. Since $q(x',x')=-x'^2=-(x'e_{2\ell-1})^2$, we know $(C',q)$ is isomorphic to $(C^+,q)$ as a Clifford algebra induced by $\psi(x')=x'e_{2\ell-1}$ for $x'\in E'$. Now the action of $e_{2\ell-1}$ on the spin module is identity. Similar procedure as even dimensional hyperquadric case can be done for $C'$, then we also obtain that the image of the tangent vector given by $Z=(-\sqrt{-1},0,...,0)$ is a tangent vector of maximal rank in the tangent space of $G(2^{\ell-1},2^{\ell-1})$.

\subsubsection{Project space into Grassmannian}
We will check that the embedding of $\mathbb{P}^n (n\geq 3)$ into $G(p,q)$ through the irreducible representation $\Lambda_m$ is not of diagonal type, where $\Lambda_m$ is the space of skew symmetric tensors of degree $m$ ($1< m< n$), and $q=\binom{n}{m}, p=\binom{n}{m-1}$. However the Lie bracket generating condition $(\dag)$ is still satisfied.
The Lie algebra of the automorphism group of $\mathbb{P}^n$ is $\mathfrak{g}_1=\mathfrak{sl}(n+1,\mathbb{C}) \subset \End(W)$ with $\dim_\mathbb{C}(W)=n+1$, and the holomorphic tangent space can be identified with 
\[\mathfrak{m}_1^+=\{\begin{bmatrix}
0 & A \\
0 & 0\\
\end{bmatrix}:A\ \text{is an $n\times 1$ matrix}\}\]
We considet the tangent vector corresponding to $A=(1,0,...,0)^t$, choose a basis $\{w_1,...w_{n+1}\}$ for $W$ then the tangent vector is corresponding to a linear transformation $T$ with $T(w_j)=0 (1\leq j\leq n), T(w_{n+1})=w_1$. Therefore the induced action of $T$ (still denoted by $T$) on $\bigwedge^mW$ satisfies
$
T(w_{i_1}\wedge w_{i_2} \wedge \cdots w_{i_m})=0
$ if $n+1\notin \{i_1,...,i_m\}$ or $1\in \{i_1,...,i_m\}$, thus the rank of $T\in End(\bigwedge^mW)$ is $\binom{n}{m-1}-\binom{n-1}{m-2}=\binom{n-1}{m-1}$. We can easily check that
\[\binom{n}{m-1}-\binom{n-1}{m-2} < \min\{\binom{n}{m-1}, \binom{n}{m}\}\]
the tangent vector on $G(p,q)$ is not of maximal rank and thus the embedding is not of diagonal type. 
\par To see whether $[V,[\mathfrak{m}^-,V]]$ will generate $\mathfrak{m}^+$ if $V=T_o(f(\mathbb{P}^n))$ where $f$ denotes the embedding above, we give the following easy lemma at first
\begin{lemma}\label{block span}
For any nonzero $w_{i_1}\wedge \cdots \wedge w_{i_{m-1}}\wedge w_{n+1}, w_{j_1}\wedge \cdots \wedge w_{j_{m}} \in \bigwedge^mW$ with $n+1\notin\{j_1,...,j_{m}\}$, there exists nonzero $T\in \End(W)$ with $T(w_j)=0$ for all $j\neq n+1$ and $T(w_{n+1})\in \text{span}\{w_1,...,w_n\}$ such that the induced action $T(w_{i_1}\wedge \cdots \wedge w_{i_{m-1}}\wedge w_{n+1})\neq 0$ and $T(W^{\wedge})=w_{j_1}\wedge \cdots \wedge w_{j_{m}}$ for some $W^{\wedge}\in \bigwedge^mW$.
\end{lemma}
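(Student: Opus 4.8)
The plan is to reduce everything to the single vector $u:=T(w_{n+1})$: by hypothesis $T$ annihilates every $w_j$ with $j\neq n+1$, so $T$ is completely determined by the choice of $u\in\text{span}\{w_1,\dots,w_n\}$. First I would record how the induced (derivation) action of such a $T$ behaves on the standard basis of $\bigwedge^m W$. On any decomposable basis vector not involving $w_{n+1}$ the action vanishes termwise, while on a basis vector $w_{k_1}\wedge\cdots\wedge w_{k_{m-1}}\wedge w_{n+1}$ (all $k_t\le n$) only the term in which $T$ hits the last factor survives, giving $w_{k_1}\wedge\cdots\wedge w_{k_{m-1}}\wedge u$. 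Consequently the image of $T$ on $\bigwedge^m W$ is exactly $\{\omega\wedge u:\omega\in\bigwedge^{m-1}\text{span}\{w_1,\dots,w_n\}\}$, and condition (a) collapses to the single requirement $w_{i_1}\wedge\cdots\wedge w_{i_{m-1}}\wedge u\neq 0$.

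Write $I=\{i_1,\dots,i_{m-1}\}$ and $J=\{j_1,\dots,j_m\}$, both subsets of $\{1,\dots,n\}$ of sizes $m-1$ and $m$ respectively. The key combinatorial observation is the cardinality inequality $|J|=m>m-1=|I|$, which forces $J\not\subseteq I$; hence I may pick an index $j_\ell\in J\setminus I$. I would then set $u=w_{j_\ell}$, that is, take $T$ to be the rank-one endomorphism with $T(w_{n+1})=w_{j_\ell}$ and $T(w_j)=0$ for $j\neq n+1$. This $T$ is nonzero since $u=w_{j_\ell}\neq 0$.

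With this choice both requirements follow at once. For (a), since $j_\ell\notin I$ the vectors $w_{i_1},\dots,w_{i_{m-1}},w_{j_\ell}$ are distinct, so $w_{i_1}\wedge\cdots\wedge w_{i_{m-1}}\wedge w_{j_\ell}\neq 0$. For (b), I would take $W^{\wedge}$ to be a suitably signed reordering of $\bigl(\bigwedge_{j\in J\setminus\{j_\ell\}}w_j\bigr)\wedge w_{n+1}$; applying $T$ replaces the final factor $w_{n+1}$ by $w_{j_\ell}$ and reassembles the full wedge $\bigwedge_{j\in J}w_j$, which equals $w_{j_1}\wedge\cdots\wedge w_{j_m}$ after absorbing the permutation sign into the choice of $W^{\wedge}$.

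I do not expect a genuine obstacle here: the content is entirely the bookkeeping of the derivation action together with the pigeonhole inequality $m>m-1$, which is precisely what guarantees that one choice of $u$ can simultaneously avoid $\text{span}\{w_{i_1},\dots,w_{i_{m-1}}\}$ (for (a)) and supply a missing factor of $w_J$ (for (b)). The only points needing minor care are the sign normalization in (b) --- harmless since $W^{\wedge}$ is ours to choose --- and confirming that the same $T$ serves both conditions, which is exactly the role played by the cardinality gap.
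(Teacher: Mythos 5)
Your proof is correct and follows essentially the same route as the paper: both arguments reduce $T$ to the single choice $T(w_{n+1})=w_{j_k}$ for some index $j_k\in\{j_1,\dots,j_m\}\setminus\{i_1,\dots,i_{m-1}\}$, whose existence is the pigeonhole count $m>m-1$. You merely spell out the derivation-action bookkeeping and the sign in $W^{\wedge}$ that the paper leaves to the reader.
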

\begin{proof}
From the given condition we know there exists some $j_k$ such that $j_k\notin \{i_1,...,i_{m-1}\}$, then choose $T_k$ which satisfies $T_{k}(w_{n+1})=w_{j_k}$ and $T_{k}(w_j)=0 (j\neq n+1)$, one can easily observe that $T_k$ is a required linear transformation. 
\end{proof}
We interpret Lemma \ref{block span} in term of matrices. Write the holomorphic tangent space of $M$ as 
\[T_o(M)\cong\mathfrak{m}^+=\{\begin{bmatrix}
0 & C \\
0 & 0\\
\end{bmatrix}:C\ \text{is a $p\times q$ matrix}\}\]
The matrix is written in terms of the basis $\{w_{i_1}\wedge \cdots w_{i_m}, 1\leq i_1<\cdots \leq i_m\leq n+1\}$ and the first $q$ rows are corresponding to the tranformation of $w_{i_1}\wedge \cdots w_{i_m}$ with $i_m=n+1$. Then the image of the $T_k$ above is corresponding to a matrix $C_k$ with only one $\binom{n-1}{m-1}\times \binom{n-1}{m-1}$ principal submatrix $C'_k$ of rank $\binom{n-1}{m-1}$ and from the lemma any position in $C$ can be covered by some of such $C'_k$'s. 
Therefore Lemma \ref{block span} says that the $(H_2)$-embedding $\mathbb{P}^m \subset G(p,q)$ satisfies \textbf{Condition C}. 
\par Now we are ready to finish the proof of Theorem \ref{generalaffineness}.

\begin{proof}
It suffices to check the Lie bracket generating condition $(\dag)$ for all $(H_2)$-subspaces. When the ambient space is of Type IV, $E_6$ or $E_7$, we have already done. It remains to consider the cases when $M$ is of Type I,II,III. We know the prototype to solve these problems comes from the diagonal curve in a tube type ambient space, comparing our situation with the prototype, we can observe that when $M=G(p,q)$ with $p\neq q$, the $(H_2)$-subspaces contain no Type II, III, IV factors and the 'trouble'  comes from the non-diagonal type embedding of a projective space. When $M=G(n,n), G^{II}(n,n), G^{III}(n,n)$, besides the non-diagonal type embedding of a projective space,  the 'trouble' also comes from Type II factor for example  $G^{II}(n,n)\subset G(n,n)$ with odd number $n$ and $G^{II}(r,r)\times G^{II}(n-r,n-r) \subset G^{II}(n,n)$ with odd number $r$ or odd number $n-r$, which are non-diagonal embeddings (but satisfy the \textbf{Condition C}).

\par Based on the observation we want to check the \textbf{Condition C} generally, we will  consider the problem using a chain of maximal $(H_2)$-subspaces. There are three types of maximal embeddings when reducible Hermitian symmetric space appears in the chain, the first type is $X_1 \times \cdots X_{i1}\times X_{i2} \cdots \times X_k \subset X_1 \times \cdots X_i\times \cdots \times X_k \subset M$ assuming all factors are irreducible, where $X_{i1}\times X_{i2}$ is a maximal $H_2$-subspace in $X_i$ and other factors are preserved; the second type is $ X_1\times \cdots X_i \times \cdots \times X_k \subset X_1\times \cdots X_i\times X_i \times \cdots \times X_k \subset M$ assuming all factors here are irreducible, where $X_i\subset X_i\times X_i$ is the diagonal embedding and also other factors are preserved; and the third type is $X_1 \times \cdots X_{i'}\cdots \times X_k \subset X_1 \times \cdots X_i\times \cdots \times X_k \subset M$ assuming all factors are irreducible, where $X_{i'}$ is an irreducible maximal $(H_2)$-subspace in $X_i$ and other factors are preserved. All $(H_2)$-subspaces can be obtained by a composition of these three types of embeddings.

\par From the table in \cite[pp.27-28]{MR2127948} we can find that all maximal $(H_2)$-subspaces in $M$ satisfy \textbf{Condition C}. We then use induction argument and suppose that $M'=X_1 \times \cdots X_i \times\cdots X_k\subset M$ is $H_2$ and satisfies \textbf{Condition C}. When $M'$ contains only Type I,II,III factors except for projective spaces, using block form together with the previous discussion we can easily observe that all these three types of maximal $(H_2)$-subspaces of $M'$ inherits \textbf{Condition C}. When $M'$ contains hyperquadric factor and $X_i=Q^s$ then from the table in \cite[pp.27-28]{MR2127948} we know $M$ must be $G(n,n), G^{II}(n,n), G^{III}(n,n)$ and the only maximal $(H_2)$-subspaces in $M'$ changing $X_i$ are $X_1 \times \cdots Q^{s} \times  \cdots \times X_k \subset X_1 \times \cdots Q^{s}\times Q^{s}\times  \cdots \times X_k=M' \subset M$ assuming there is another factor $Q^s$ in $M'$ such that $Q^s$ in the subspace is diagonally embedded in $Q^s\times Q^s$, or $X_1 \times \cdots Q^{s'} \times  \cdots \times X_k \subset X_1 \times \cdots Q^{s}\times  \cdots \times X_k=M' \subset M$ for some $s'<s$. Since the embedding of a hyperquadric as a maximal $(H_2)$-subspace is always of diagonal type in a tube type ambient space, this case can be easily done. If $X_i$ is preserved, then the case follows from the case when $M'$ contains only Type I,II,III factors directly. When $M'$ contains $X_i=\mathbb{P}^s$ factor, then only maximal $(H_2)$-subspaces in $M'$ changing $X_i$ (also if $X_i$ is preserved this follows from the cases discussed above) is $X_1 \times \cdots \mathbb{P}^{s} \times  \cdots \times X_k \subset X_1 \times \cdots \mathbb{P}^{s}\times \mathbb{P}^{s}\times  \cdots \times X_k \subset M$ assuming there is another factor $\mathbb{P}^s$ in $M'$ and $\mathbb{P}^s$ is diagonally embedded in $\mathbb{P}^s\times \mathbb{P}^s$ as there is no maximal $(H_2)$-subspace for a projective space, for this case we discuss more as follows.
\par  We need to give more explanations for cases concerning the diagonal embedding of projective space which is less obvious to preserve the \textbf{Condition C}. From the table in \cite[pp.27-28]{MR2127948}, if $M=G(p,q)$ with $p\neq q$, all factors must be some $G(p',q')$ with $\frac{p'}{q'}=\frac{p}{q}$ and projective spaces. In this case note that the embedding $\mathbb{P}^s \subset G(p',q')$ satisfies $p':q'=m:s-m+1$ for some $1<m<s$ and the factor $G(p',q')$ in an $(H_2)$-embedding satisfies $\frac{p}{q}=\frac{p'}{q'}$, so $m$ can be uniquely determined, we conclude that the diagonal embedding of projective space must be of the form $\mathbb{P}^s \subset \mathbb{P}^s \times \mathbb{P}^s \subset G(p',q')\times G(p',q')$. Therefore in this case we can observe that in a chain of $(H_2)$-subspaces, \textbf{Condition C} is always preserved. If $M=G(n,n), G^{II}(n,n), G^{III}(n,n)$, then in the chain of $(H_2)$-subspaces, embedding of projective space into Type II or Type III spaces can be uniquely determined by the dimension of the projective space and a $\mathbb{P}^s$ can not be a maximal $(H_2)$-subspace of Type II and Type III spaces at the same time, which implies that the chain of maximal $(H_2)$-subspace $\mathbb{P}^s\subset \mathbb{P}^s\times\mathbb{P}^s \subset X_1\times X_2$ with $X_1,X_2$ irreducible and $X_1\neq X_2$ will not appear. Hence \textbf{Condition C} is also preserved. 

\par Hence the (weaker) gap rigidity holds if there is a $P$-invariant Zariski open subset $\mathcal{O}_o=Gr(\dim(X),T_o(M))-\mathcal{Z}_o$ for some $P$-invariant hypersurface $\mathcal{Z}_o$ such that $[T_o(X)]\in \mathcal{O}_o$. 
\end{proof}

\begin{remark}
We did not find a conceptual proof on the Lie bracket generating condition $(\dag)$ independent of the classification and we can observe that $(H_2)$-condition is sufficient but not necessary, since for example $\mathbb{P}^2\times \mathbb{P}^2 \subset G(3,3)$ by standard embedding is a $(H_1)$-subspace satisfying the Lie bracket generating condition but it is not $H_2$.
\end{remark}
\section*{Acknowledgement}
The work was partially supported by a Postdoctoral International 
Exchange Program in China. The author would like to thank Professor Ngaiming Mok for helpful discussions. 


\bibliographystyle{alpha}  
\bibliography{research} 





\end{document}